\def\blfootnote{\gdef\@thefnmark{}\@footnotetext}
\theoremstyle{plain}
\newtheorem*{theorem*}{Theorem}
\newtheorem{thmA}{Theorem}
\newtheorem{thmB}{Theorem}
\newtheorem{thmC}{Theorem}
\newtheorem{thmD}{Theorem}
\newtheorem{thmE}{Theorem}
\newtheorem{thmF}{Theorem}
\newtheorem{theorem}{Theorem}[section]
\newtheorem{lemma}[theorem]{Lemma}
\theoremstyle{remark}
\newtheorem{remark}[theorem]{Remark}
\newtheorem*{remark*}{Remark}
\theoremstyle{definition}
\newtheorem{definition}[theorem]{Definition}
\definecolor{darkgreen}{rgb}{0.0, 0.5, 0.0}
 \def\Z{{\mathbb{Z}}}
\def\mod{{\rm Map}}
\def\pmod{{\rm PMap}}
\def\Ends{{\rm Ends}}
 \def\Sym{{\rm Sym}}
 \def\conj#1#2{#1^{#2}}
 \DeclarePairedDelimiter{\set}{\{}{\}}
\begin{document}

\blfootnote{\textup{2000} \textit{Mathematics Subject Classification}:
57M07, 20F05, 20F38}
\blfootnote{\textit{Keywords}:
Big mapping class groups, infinite surfaces, involutions, generating sets}
\newenvironment{prooff}{\medskip \par \noindent {\it Proof}\ }{\hfill
$\square$ \medskip \par}
    \def\sqr#1#2{{\vcenter{\hrule height.#2pt
        \hbox{\vrule width.#2pt height#1pt \kern#1pt
            \vrule width.#2pt}\hrule height.#2pt}}}
    \def\square{\mathchoice\sqr67\sqr67\sqr{2.1}6\sqr{1.5}6}
\def\pf#1{\medskip \par \noindent {\it #1.}\ }
\def\endpf{\hfill $\square$ \medskip \par}
\def\demo#1{\medskip \par \noindent {\it #1.}\ }
\def\enddemo{\medskip \par}
\def\qed{~\hfill$\square$}

 \title[Small Torsion Topological Generators for Big Mapping Class Groups] {Small Torsion Topological Generators for Big Mapping Class Groups}

\author[T{\"{u}}l\.{i}n Altun{\"{o}}z, Celal Can Bellek, Em{\.{i}}r G{\"{u}}l,       Mehmetc\.{i}k Pamuk, and O\u{g}uz Y{\i}ld{\i}z ]{T{\"{u}}l\.{i}n Altun{\"{o}}z, Celal Can Bellek, Em{\.{i}}r G{\"{u}}l,      Mehmetc\.{i}k Pamuk, and O\u{g}uz Y{\i}ld{\i}z}
\address{Faculty of Engineering, Ba\c{s}kent University, Ankara, Turkey} 
\email{tulinaltunoz@baskent.edu.tr} 
\address{Department of Mathematics, Middle East Technical University,
 Ankara, Turkey}
\email{celal.bellek@metu.edu.tr}
\address{Department of Mathematics, Middle East Technical University,
 Ankara, Turkey}
\email{gul.emir@metu.edu.tr} 
\address{Department of Mathematics, Middle East Technical University,
 Ankara, Turkey}
 \email{mpamuk@metu.edu.tr}
 \address{Department of Mathematics, Middle East Technical University,
 Ankara, Turkey}
  \email{oguzyildiz16@gmail.com}

\begin{abstract}  

Let $S(n)$, for $n \in \mathbb{N}$, be the infinite-type surface of infinite genus
with $n$ ends, each accumulated by genus.
Although the mapping class groups of these surfaces are not countably generated,
they are Polish groups and hence admit a countable topological generating set.
We study minimal topological generating sets for $\mathrm{Map}(S(n))$ consisting
entirely of torsion elements, with special attention to involutions. In particular, we prove that $\mathrm{Map}(S(n))$ is topologically
generated by \emph{four} involutions for all $n \geq 16$, and by \emph{three}
involutions for the Loch Ness Monster surface ($n = 1$) and the Jacob's Ladder
surface ($n = 2$). We also establish that for even
$n \geq 8$, $\mathrm{Map}(S(n))$ is topologically generated by four torsion
elements of order $n$. For odd $n \geq 8$, it is topologically generated by three
torsion elements of order $n$ and one torsion element of order $n - 1$.

\end{abstract}

\maketitle
  \setcounter{secnumdepth}{2}
 \setcounter{section}{0}

The mapping class group of a surface $S$, denoted $\mathrm{Map}(S)$, is the group
of isotopy classes of orientation-preserving self-homeomorphisms of $S$ that fix
the boundary components (if any) pointwise and permute the ends setwise. It is a fundamental object in low-dimensional topology, encoding the symmetries of
the surface. For surfaces of \emph{finite type} (those with a finitely generated fundamental
group), the structure of this group is well understood: a classical theorem shows that it is finitely generated by Dehn twists~\cite{Dehn1987,Humphries1979,Lickorish1964}, and Wajnryb later showed that two generators suffice~\cite{Wajnyrb1996}, a result sharpened further by Korkmaz~\cite{Korkmaz2004}.

A particularly fruitful line of research has been the generation of mapping class groups by torsion elements, especially involutions. The theoretical motivation for this dates back to Maclachlan\cite{Maclachlan1971}, who demonstrated that the generation of the mapping class group by torsion elements implies that the moduli space of algebraic curves is simply-connected. McCarthy and Papadopoulos showed that for genus $g \geq 3$, $\mathrm{Map}(S)$
is generated by infinitely many conjugates of a single involution~\cite{McCarthy1987}. Luo later demonstrated that a \emph{finite} collection of involutions suffices~\cite{Luo2000}, sparking a series of improvements. Brendle and Farb reduced the count to six involutions for $g \geq 3$~\cite{Brendle2004}, Kassabov subsequently brought this down to four for $g \geq 7$~\cite{Kassabov2003}, and Korkmaz and the fifth author reduced it further to three for $g \geq 6$~\cite{Korkmaz2020,Yildiz2020}. Similar minimal generation results utilizing torsion elements and involutions have also been successfully established for finite-type surfaces with punctures by Monden and the others. 
 
In recent years, attention has shifted to mapping class groups of
infinite-type surfaces, often called big mapping class groups. These groups are structurally richer: they are not finitely generated. However, when equipped with the compact-open topology they become Polish groups, and are hence topologically generated by a countable dense subgroup. Their topological generating sets must include not only Dehn twists but also homeomorphisms of infinite support called handle shifts, as shown by Patel and Vlamis~\cite{Patel2018}.  The topological normal generation problem for these groups was recently
studied by Baik~\cite{Baik2024}, who proved that, for surfaces with
countable end space, topological normal generation is equivalent to
unique self-similarity.  The authors recently proved that $\mathrm{Map}(S(n))$ is topologically generated by at most four elements~\cite{mingen}.
 
This paper is a companion to~\cite{mingen} and uses similar methods to study small topological generating sets consisting of torsion elements for the mapping class groups of the surfaces $S(n)$, surfaces with infinite genus and exactly $n$ ends, each accumulated by genus. The paper pursues two objectives: minimizing the number of \emph{involution} generators, and constructing topological generating sets of torsion elements of certain orders.
 
\medskip\noindent\textbf{Involution generators.}
For the surfaces $S(n)$, Huynh showed that $\mathrm{Map}(S(n))$ can be topologically generated by seven involutions for $n \geq 3$~\cite{huynh}. This was subsequently improved by the first, fourth, and fifth authors, who reduced the bound to six involutions for $n \geq 3$ and to five for $n \geq 6$~\cite{apyinvolution}.  More broadly, Malestein and Tao studied generation by involutions for
self-similar surfaces~\cite{MalesteinTao2024}, showing that the mapping class groups of uniformly self-similar surfaces are generated
by involutions and are in fact normally generated by a single involution. The present paper establishes further reductions to topological generation results (Theorems~A and B). Additionally, this paper establishes topological generating sets of involutions for the Loch Ness Monster surface and the Jacob's Ladder surface (Theorems~C and D).

\medskip\noindent\textbf{Torsion generators of higher order.}
Beyond involutions, one can ask for generating sets whose elements all have a prescribed finite order. For finite-type surfaces, Korkmaz~\cite{korkmaz2005} proved that $\mathrm{Map}(S)$ can be generated by exactly two torsion elements of order $4g+2$, when $g\geq 3$ and the surface has at most one boundary component. Building on this, the fifth author showed that $\mathrm{Map}(S)$ can be generated by two torsion elements of smaller orders, such as order $g$, for sufficiently large genus $g$~\cite{Yildiz2022}. We address the analogous question for big mapping class groups in Theorems~E and~F below, obtaining generating sets of four torsion elements of order $n$ for even $n \geq 8$, and of three elements of order $n$ plus one element of order $n-1$ for odd $n \geq 8$.

\subsection*{Main Results}\label{subsec:results}
 
The main contributions of this paper are as follows. For the involution generation results, the generating sets differ for even and odd
$n$, so we state these separately.
 
\begin{thmA}
  For any odd integer $n \geq 17$, the mapping class group $\mathrm{Map}(S(n))$
  is topologically generated by four involutions.
\end{thmA}
 
\begin{thmB}
  For any even integer $n \geq 16$, the mapping class group $\mathrm{Map}(S(n))$
  is topologically generated by four involutions.
\end{thmB}
 
\begin{thmC}
  The mapping class group of the Jacob's Ladder surface $S(2)$ is topologically
  generated by three involutions.
\end{thmC}
 
\begin{thmD}
  The mapping class group of the Loch Ness Monster surface $S(1)$ is topologically
  generated by three involutions.
\end{thmD}
 
For torsion elements, we establish:
 
\begin{thmE}
  For any even integer $n \geq 8$, the mapping class group $\mathrm{Map}(S(n))$
  is topologically generated by four torsion elements of order $n$.
\end{thmE}
 
\begin{thmF}
  For any odd integer $n \geq 8$, the mapping class group $\mathrm{Map}(S(n))$
  is topologically generated by three torsion elements of order $n$ and one torsion
  element of order $n - 1$.
\end{thmF}
 
\begin{remark*}\label{rem:minimality}
It is natural to ask whether the generating sets in Theorems~C and~D are optimal.
For purely algebraic reasons, \emph{two} involution generators are never enough:
any group generated by two involutions is a quotient of the infinite dihedral group
$D_\infty$ and is therefore virtually cyclic, whereas $\mathrm{Map}(S(n))$ contains
non-abelian free subgroups. Hence three is the minimum possible number of involution generators, confirming that Theorems~C and~D are sharp.

\end{remark*}
 
The paper is organized as follows. In Section~2 we briefly recall the necessary definitions and concepts from \cite{mingen}. Section~3 contains the proofs of the involution generation results Theorems~A through D, treating surfaces with $n \geq 16$ ends and the special cases of the Jacob's Ladder and Loch Ness Monster surfaces separately. Section~4 presents the higher-order torsion generation results, Theorems~E and~F, for $n \geq 8$.
 
\subsection*{Notation}\label{notation}
 
The notation follows standard conventions in mapping class group theory, with a few
specifics recorded here. The surface of infinite genus with $n$ ends, each accumulated by genus, is denoted $S(n)$, and its mapping class group $\mathrm{Map}(S(n))$. For brevity, a homeomorphism and its isotopy class are denoted by the same symbol. Group composition $f \circ g$ is written as $fg$. The right-handed Dehn twist about a simple closed curve $a$ (i.e.\ $t_a$) is written as the corresponding capital letter, e.g.\ $A$.
 
For the surfaces $S(n)$ with $n \geq 3$ ends, a \emph{double-index} system is used: specific Dehn twists are written $A_i^j$, $B_i^j$, $C_{i-1}^j$, corresponding to curves $a_i^j$, $b_i^j$, $c_{i-1}^j$. The lower index $i = 1,2,3,\ldots$ records the genus level (position along the infinite chain of genera within one end, as in Figure~\ref{fig:inf}), while the upper index $j = 1,\ldots,n$ records which of the $n$ accumulated ends the curve
lies near. The inverse of a mapping class $X$ is written $\overline{X}$. A handle shift is denoted $h$ or $h_{i,j}$. In the simplified notation used in some proofs (e.g.\ Theorem~\ref{thm:A} for $n \geq 17$), the lower index is suppressed so that $A_i^j$, $B_i^j$, $C_{i-1}^j$ are written $A_j$, $B_j$, $C_j$ respectively, with $i = 1,2,3,\ldots$ and $j = 1,\ldots,n$.
 
\begin{figure}[htbp]
  \centering
  \hspace*{-0.5cm}
  \scalebox{0.5}{\includegraphics{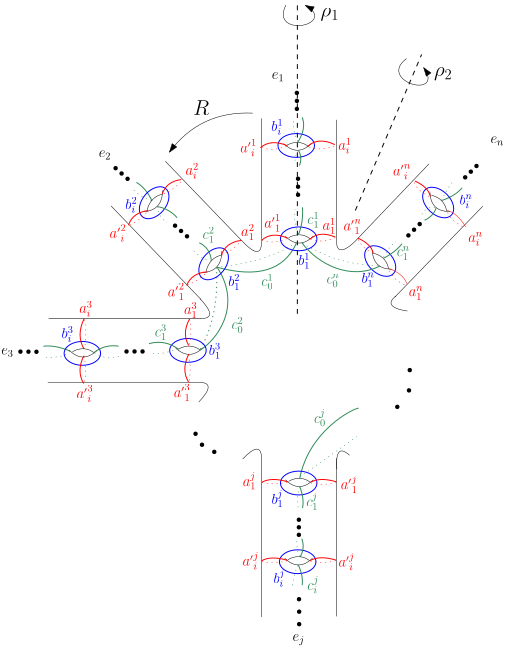}}
  \caption{The surface $S(n)$, showing the families of curves used for
  generating sets. The upper index labels the end, while the lower index labels the genus level.}
  \label{fig:inf}
\end{figure}
 
\noindent\textbf{Acknowledgements.}
This work is supported by the Scientific and Technological Research Council of
Turkey (T\"{U}B\.{I}TAK) [grant number 125F253].

\section{Preliminaries on infinite-type surfaces}
\subsection{Classification of infinite-type surfaces}
To classify surfaces of infinite type we use the \textit{space of ends} $\Ends(S)$, which records the distinct directions to infinity of the surface.  The construction begins with exiting sequences (nested connected open sets with compact boundary that eventually avoid every compact subset of $S$); $\Ends(S)$ is the set of equivalence classes of such sequences, equipped with the topology generated by the sets $U^*$, where $U\subset S$ is open with compact boundary and $U^*$ consists of those ends represented by exiting sequences eventually contained in $U$.  Intuitively, the space of ends describes how many different directions goes to infinity, how those directions relate to each other and whether those directions contain infinitely many genera or not. We say that an end is accumulated by genus if every element of the sequence defining the end contains infinitely many genera. The classification theorem for orientable infinite-type surfaces then asserts that two such surfaces are homeomorphic exactly when they have the same genus and number of boundary components and there exists a homeomorphism $\Ends(S_1)\cong\Ends(S_2)$. The definitions and conventions used here follow Aramayona-Vlamis~\cite{Aramayona_2020}.

\begin{theorem}
 Let $S_{1}$ and $S_{2}$ be two infinite-type surfaces. Let $g_1, g_2$ and $b_1, b_2$ be the number of genera and boundary components of these surfaces, respectively. Then, $S_1 \cong S_2$ if and only if $g_1 = g_2$, $b_1=b_2$, and there is a homeomorphism
    \begin{align*}
        \Ends(S_1) \rightarrow \Ends(S_2)
    \end{align*}
    that restricts to a homeomorphism between their respective subspaces consisting of ends accumulated by genus.
\end{theorem}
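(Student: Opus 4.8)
The plan is to prove the two implications separately; the forward direction is immediate and essentially all the real work is in the converse. If $F\colon S_1\to S_2$ is a homeomorphism, then $F$ restricts to a bijection $\partial S_1\to\partial S_2$, giving $b_1=b_2$; it carries each compact subsurface of $S_1$ to a homeomorphic subsurface of $S_2$, so the supremum of genera over compact subsurfaces agrees and $g_1=g_2$; and since $\Ends(S)$ is built functorially from exiting sequences of open sets with compact boundary, $F$ takes such sequences to such sequences and descends to a homeomorphism $\Ends(S_1)\to\Ends(S_2)$ which moreover matches up the ends accumulated by genus on the two sides. I would isolate these observations as a one-line invariance lemma.

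For the converse I would run the classical Kerékjártó--Richards exhaustion argument. Fix a homeomorphism $\varphi\colon\Ends(S_1)\to\Ends(S_2)$ and recall that each $\Ends(S_i)$ is a compact, totally disconnected, metrizable space, so it admits arbitrarily fine finite partitions into clopen sets. First I would build a nested exhaustion $K_1\subset K_2\subset\cdots$ of $S_1$ by compact connected subsurfaces such that the components of $S_1\setminus K_m$ induce a clopen partition of $\Ends(S_1)$ that refines as $m\to\infty$, with each collar piece $K_{m+1}\setminus K_m$ a compact bordered surface. Using $\varphi$ to transport the clopen partitions, I would construct a matching exhaustion $L_1\subset L_2\subset\cdots$ of $S_2$; by passing to common refinements I can arrange that corresponding pieces $K_{m+1}\setminus K_m$ and $L_{m+1}\setminus L_m$ separate $S_i$ combinatorially in the same way (same number of boundary curves on each side, same induced partition of ends) and carry the same amount of genus. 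The classification of compact surfaces with boundary then supplies a homeomorphism between each pair of pieces.

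The final step is to glue these piecewise homeomorphisms into a single map and pass to the direct limit. Here I would use that a homeomorphism between two circles extends over a collar, and that any two orientation-preserving homeomorphisms of a circle are isotopic, so the block homeomorphisms can be adjusted to agree on the shared boundary curves and glued consistently; the resulting direct limit map $S_1\to S_2$ is the desired homeomorphism, and by construction it induces $\varphi$ on ends.

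The main obstacle, and the step demanding the most care, is constructing the two exhaustions compatibly: I must simultaneously (i) realize the given $\varphi$ in the limit, (ii) distribute the possibly infinite genus of each surface correctly among the complementary regions --- this is precisely where respecting the subset of ends accumulated by genus is essential, since such an end must be approached through pieces of ever-growing genus --- and (iii) keep the boundary data matched so that the gluing of the previous step is unobstructed. Arranging all three constraints at once, rather than any one in isolation, is the crux, and the usual remedy is an interleaving, back-and-forth refinement of the two exhaustions.
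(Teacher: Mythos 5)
This theorem is quoted in the paper as background (it is the classical Ker\'ekj\'art\'o--Richards classification, cited via Aramayona--Vlamis), and the paper supplies no proof of it at all; so your proposal can only be measured against the standard argument in the literature. Measured that way, your outline is exactly the canonical proof: the forward direction by functoriality of the end space and invariance of genus and boundary, the converse by building compatible compact exhaustions of $S_1$ and $S_2$ whose complementary pieces induce matching clopen partitions of the end spaces, matching the genus and boundary data of corresponding pieces, applying the classification of compact surfaces with boundary piecewise, and gluing in the direct limit. You also correctly identify the crux --- the back-and-forth refinement needed to realize the given map on ends while distributing genus correctly.

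There is, however, one genuine issue, and it sits precisely at the point you flag as the crux. The statement as written only hypothesizes a homeomorphism $\Ends(S_1) \rightarrow \Ends(S_2)$, but your converse argument needs (and silently uses) the stronger hypothesis that this homeomorphism carries the ends accumulated by genus of $S_1$ onto those of $S_2$: without that, you cannot ``distribute the possibly infinite genus correctly among the complementary regions,'' since an end accumulated by genus must be approached through pieces of ever-growing genus and a planar end must not. Indeed, the statement is false as literally written: take $S_1$ to be the Jacob's Ladder surface and $S_2$ to be the Loch Ness Monster surface with one additional planar end. Both have infinite genus, empty boundary, and two-point end spaces, yet they are not homeomorphic, because exactly one end of $S_2$ is accumulated by genus while both ends of $S_1$ are. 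The correct hypothesis (as in Richards and in Aramayona--Vlamis) is a homeomorphism of pairs $(\Ends(S_1), \Ends_g(S_1)) \rightarrow (\Ends(S_2), \Ends_g(S_2))$, where $\Ends_g$ denotes the subset of ends accumulated by genus; this is presumably what the paper intends, and it is harmless for the paper's use case since every end of $S(n)$ is accumulated by genus. Your proof becomes correct (in outline) once the hypothesis is stated in this pair form; as a proof of the literal statement it cannot be repaired, because the literal statement is false.
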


\subsection{Generating the big mapping class groups}

\begin{definition}
    \textit{The pure mapping class group}, denoted by $\pmod(S(n))$, is the subgroup of $\mod(S(n))$ such that it fixes $\Ends(S)$ pointwise.
\end{definition}

For the surfaces of infinite-type, $\mod(S(n))$ is not countably generated.  However, since it is a quotient of the group of orientation-preserving self-homeomorphisms of $S(n)$ (equipped with the compact-open topology), $\mod(S(n))$ inherits a topology. Because of this, $\mod(S(n))$ is a Polish group~\cite{Aramayona_2020}, meaning in particular that it is separable.  Therefore, $\mod(S(n))$ is topologically generated by a countable set.
We have the following exact sequence:
\begin{align*}
    1 \rightarrow \pmod(S(n)) \rightarrow \mod(S(n)) \rightarrow \Sym_n \rightarrow 1.
\end{align*}
Here, $\Sym_n$ is the symmetric group on $n$ letters and the last map is the projection defined by the action of a mapping class on the space of ends, which is the symmetric group on $n$ letters for $\mod(S(n))$. It follows that $\mod(S(n))$ is topologically generated by the generators of $\pmod(S(n))$ together with mapping classes whose image in $\Sym_n$ generate it.

\subsection*{Handle shifts}
The generators of these groups often include not only Dehn twists, but also homeomorphisms with infinite support called \textit{handle shifts} as shown by Patel and Vlamis~\cite{Patel2018}.

Following~\cite{tez}, we define the handle shift as follows:
Consider the surface $\mathbb{R} \times [-1,1]$ with disks of radius $1/4$ removed and a copy of $S_1^1$ attached along the boundaries of the removed disks at every point $(n,0)$ where $n \in \mathbb{Z}$. This surface is called the \textit{model surface of a handle shift} and denote it by $\Sigma$.

Note that $\Sigma$ is a surface with two ends accumulated by genus that correspond to $\pm \infty$ of $\mathbb{R}$ and two disjoint boundary components $\mathbb{R}\times\{-1\}$ and $\mathbb{R}\times\{1\}$. This means that we can embed $\Sigma$ to any infinite-type surface S with at least two ends accumulated by genus. We define $h: \Sigma \rightarrow \Sigma$ as
\begin{align*}
    h(x,y)=
    \begin{cases}
        (x+1,y) & \text{if }  y \in [-\dfrac{1}{2},\dfrac{1}{2}],\\
        (x+2-2y,y) & \text{if }  y \in [\dfrac{1}{2},1],\\
        (x+2+2y,y) & \text{if } y \in [-1, -\dfrac{1}{2}]
    \end{cases}
\end{align*}
on $\mathbb{R} \times [-1,1]$. This self-homeomorphism $h$ is called a \textit{handle-shift}.

\begin{figure}[htbp]
      \centering
      \includegraphics[width=0.60\textwidth]{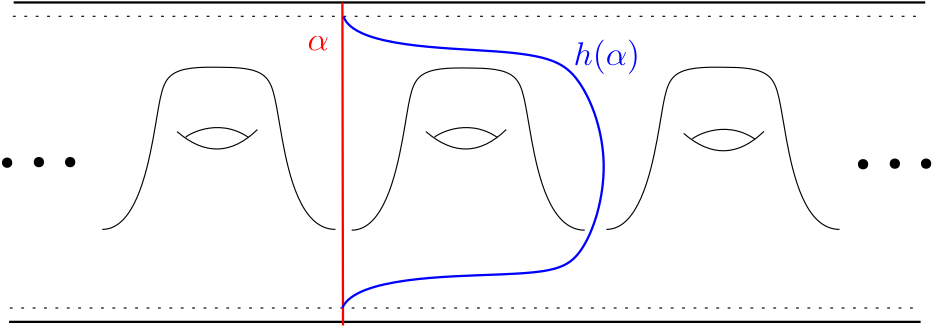}
      \caption{The action of a handle shift $h$ on a transverse curve $\alpha$. The model surface $\Sigma$ illustrates the shift of genera from one region to another}
      \label{fig:handleshift}
\end{figure}

Patel and Vlamis showed in their initial paper that for infinite-type surfaces with more than one end accumulated by genus, handle shifts and Dehn twists are required to topologically generate $\pmod(S)$ [Proposition 6.3, \citenum{Patel2018}]. Moreover Aramayona-Patel-Vlamis improved this result by proving that $\pmod(S)$ can be split as a semi-direct product of $\overline{\pmod_\mathrm{c}(S)}$ and a product of handle shifts \cite{arapatevla}. We state this result for the case relevant to us in this paper:
\begin{theorem}[{\cite[Corollary 6]{arapatevla}}]
    For $S(n)$,
    \[
    \pmod(S(n)) = \overline{\pmod_\mathrm{c}(S(n))} \rtimes \Z^{n-1}.
    \]
\end{theorem}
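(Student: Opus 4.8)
The plan is to realize the decomposition as the splitting of a short exact sequence
$1 \to \overline{\pmod_\mathrm{c}(S(n))} \to \pmod(S(n)) \xrightarrow{\Phi} \Z^{n-1} \to 1$,
where $\Phi$ is a ``genus-flux'' homomorphism and the splitting is provided by a commuting family of handle shifts. First I would record that $\overline{\pmod_\mathrm{c}(S(n))}$ is a closed normal subgroup: conjugation carries a compactly supported map to a compactly supported map, and since conjugation by a fixed element is a self-homeomorphism of the Polish group $\pmod(S(n))$, it preserves the closure as well. Thus the quotient is a well-defined topological group and the sequence above makes sense.

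Next I would construct $\Phi$. Enumerate the ends $e_1,\dots,e_n$, each accumulated by genus, and for each fix a neighborhood basis by separating subsurfaces with compact boundary. Given $f\in\pmod(S(n))$ and a separating curve $c$ cutting off a small neighborhood $U_i$ of $e_i$ (containing no other end), the mapping class $f$ is pure and hence fixes $e_i$, so $f(U_i)$ is again such a neighborhood and the region $U_i \,\triangle\, f(U_i)$ is compact. Setting $\phi_i(f)$ to be the signed genus transported across $c$, i.e. the difference of the genera of $f(U_i)\setminus U_i$ and $U_i\setminus f(U_i)$, gives a quantity independent of the choice of $c$ and of the representative of $f$, because any two admissible curves cobound a compact subsurface of fixed genus. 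Additivity under composition makes $\Phi=(\phi_1,\dots,\phi_n)$ a homomorphism to $\Z^n$, and since a homeomorphism neither creates nor destroys genus, $\sum_i \phi_i(f)=0$. Hence $\Phi$ lands in the augmentation kernel $\{(a_1,\dots,a_n): \sum_i a_i = 0\}\cong\Z^{n-1}$.

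The crux, and the step I expect to be the main obstacle, is to identify $\ker\Phi = \overline{\pmod_\mathrm{c}(S(n))}$. The inclusion $\overline{\pmod_\mathrm{c}(S(n))}\subseteq\ker\Phi$ is the easy direction: a compactly supported map moves no genus past any sufficiently distant separating curve, so $\Phi$ vanishes on $\pmod_\mathrm{c}(S(n))$, and since each $\phi_i$ is locally constant in the compact-open topology, $\Phi$ vanishes on the closure. The reverse inclusion is the delicate part: given $f$ with $\Phi(f)=0$, one must approximate $f$ arbitrarily well in the compact-open topology by compactly supported mapping classes. The idea is to use the change-of-coordinates principle and the Alexander method (as in~\cite{Aramayona_2020}) to isotope $f$ so that on each compact piece $K_m$ of the exhaustion it agrees with a homeomorphism supported in a slightly larger compact set; vanishing flux is exactly the condition guaranteeing that no handles must be pushed across $\partial K_m$, so each correction can be taken compactly supported. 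Organizing these corrections coherently across all $m$ and verifying convergence is the technical heart of the argument, and is the content carried out by Aramayona--Patel--Vlamis~\cite{arapatevla}.

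Finally I would split the sequence. Choose $n-1$ handle shifts $h_1,\dots,h_{n-1}$ with pairwise disjoint supports, arranged so that $h_k$ transports handles from the end $e_n$ toward $e_k$. Disjointness of supports makes them commute, so $\gen{h_1,\dots,h_{n-1}}\cong\Z^{n-1}$, and by construction $\Phi(h_k)=e_k-e_n$, which is a basis of the augmentation kernel. Therefore $\Phi$ restricts to an isomorphism on this subgroup, yielding a continuous section of the quotient map and hence the desired decomposition $\pmod(S(n)) = \overline{\pmod_\mathrm{c}(S(n))} \rtimes \Z^{n-1}$.
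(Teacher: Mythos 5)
A preliminary remark: the paper offers no internal proof of this statement---it is quoted verbatim from Aramayona--Patel--Vlamis \cite{arapatevla}---so the only meaningful comparison is with that cited source. Your overall architecture (a continuous ``genus-flux'' homomorphism $\Phi\colon \pmod(S(n))\to\Z^{n-1}$ vanishing on $\overline{\pmod_\mathrm{c}(S(n))}$, split by $n-1$ commuting handle shifts) does match the spirit of the cited proof, and the parts you actually verify are fine: normality and closedness of $\overline{\pmod_\mathrm{c}(S(n))}$, well-definedness and additivity of the flux, vanishing of $\Phi$ on the closure by continuity, and the fact that the images $\Phi(h_k)$ form a basis of the augmentation kernel.

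The genuine gap is the identification $\ker\Phi=\overline{\pmod_\mathrm{c}(S(n))}$. The inclusion you yourself call ``the technical heart''---that every zero-flux pure mapping class is a limit of compactly supported ones---is precisely the substance of the theorem, and you do not prove it: you defer it to Aramayona--Patel--Vlamis, i.e., to the statement under proof, which makes the proposal circular as it stands. Moreover, the direct approximation strategy you sketch (Alexander method, exhaustion-by-exhaustion compactly supported corrections) is not how the cited argument goes and is considerably harder than necessary. The standard argument sidesteps any characterization of $\ker\Phi$: by Patel--Vlamis \cite{Patel2018}, $\pmod(S(n))$ is topologically generated by compactly supported mapping classes together with handle shifts $h_1,\dots,h_{n-1}$; one then shows the subgroup $\overline{\pmod_\mathrm{c}(S(n))}\cdot\gen{h_1,\dots,h_{n-1}}$ is closed, because if $c_km_k\to g$ with $c_k\in\overline{\pmod_\mathrm{c}(S(n))}$ and $m_k\in\gen{h_1,\dots,h_{n-1}}$, then continuity of $\Phi$ and discreteness of $\Z^{n-1}$ force $\Phi(m_k)$, hence $m_k$, to be eventually constant, so $g$ lies in the product. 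A closed subgroup containing a topological generating set is everything, so $\pmod(S(n))=\overline{\pmod_\mathrm{c}(S(n))}\cdot\gen{h_1,\dots,h_{n-1}}$; combined with injectivity of $\Phi$ on $\gen{h_1,\dots,h_{n-1}}$ this gives the semidirect product, and $\ker\Phi=\overline{\pmod_\mathrm{c}(S(n))}$ then drops out as a corollary rather than serving as the engine. Replacing your deferred step with this closedness argument plus the Patel--Vlamis generation theorem would turn your outline into a complete proof.
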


This result shows that any set that topologically generates $\overline{\pmod_\mathrm{c}(S(n))}$ and that contains $n-1$ handle shifts with different attracting and repelling ends topologically generates the entire pure mapping class group.

\subsection{Special infinite-type surfaces}
\subsection*{The Loch Ness Monster surface} The closed surface with one end accumulated by genus is called the Loch Ness Monster Surface.

\begin{figure}[htbp]
      \centering
      \includegraphics[width=0.40\textwidth]{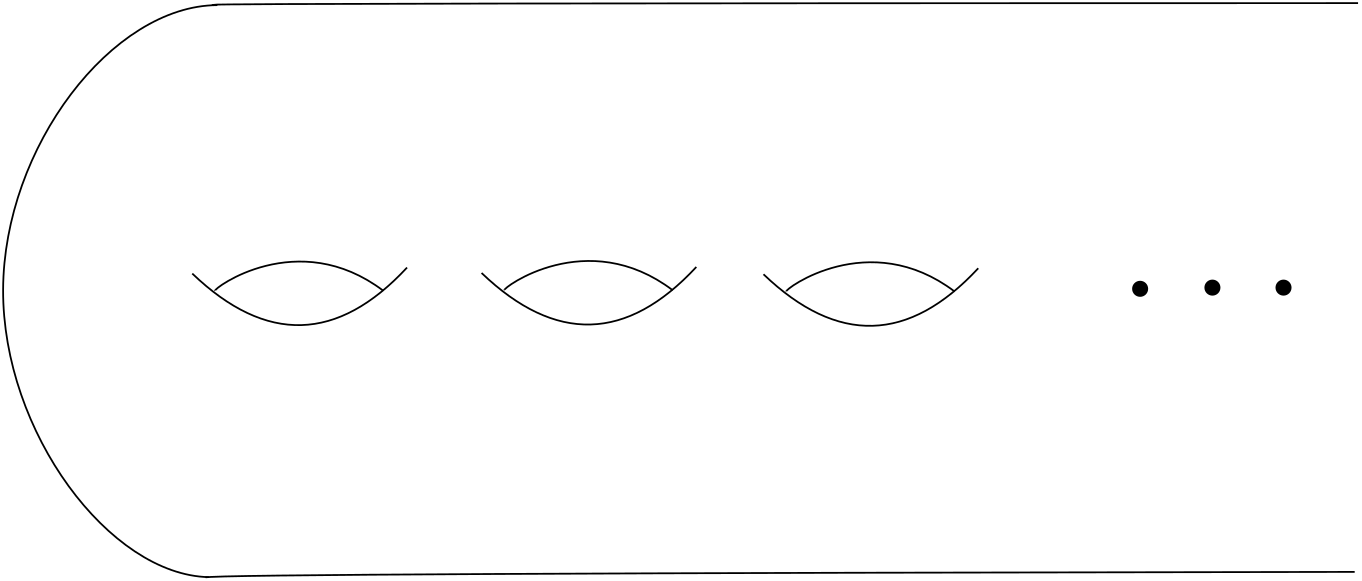}
      \caption{An embedding of the Loch Ness Monster surface, the infinite-genus surface with a single end.}
      \label{fig:model}
   \end{figure}

\subsection*{The Jacob's Ladder surface}\label{Jacob} The closed surface with two end accumulated by genus is called The Jacob's Ladder Surface.

\begin{figure}[htbp]
      \centering
      \includegraphics[width=0.40\textwidth]{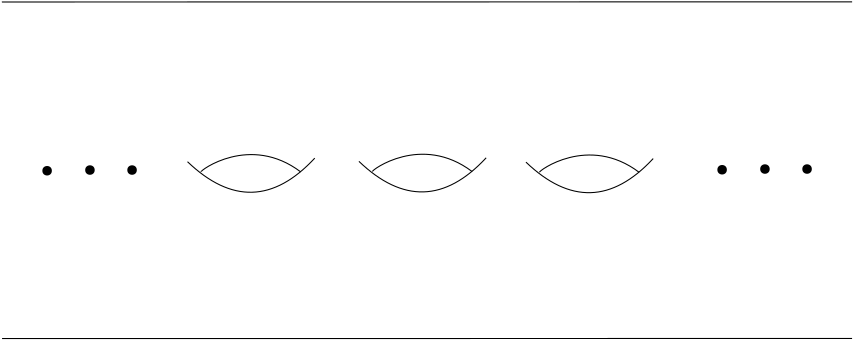}
      \caption{An embedding of the Jacob's Ladder surface, the infinite-genus surface with two ends}
      \label{Jacob's Ladder}
\end{figure}

\section{Involution Generators}

In this section, we provide the proofs for our main theorems. Our method is to show that our minimal generating sets contain certain elements that are enough to generate every Dehn twist and handle shifts in $\mod(S(n))$, then show that our generators are involutions.

\subsection{Surfaces with more than two ends}
Let $R$ be the counter-clockwise rotation of the ends of the surface $S(n)$. This homeomorphism rotates the surface by an angle of $\frac{2\pi}{n}$ radians. Let $\rho_1$ be a $\pi$ radian rotation through the first end and $\rho_2$ be a $\pi$ radian rotation between the ends numbered $\frac{n}{2}$ and $\frac{n}{2}+1$ if $n$ is even and rotation through the end numbered $\frac{n+1}{2}$ if $n$ is odd. The rotations $R,\rho_1$ and $\rho_2$ are depicted in Figure \ref{fig:inf}. We shall use the following theorem, which gives us a finite generating set for $\pmod_\mathrm{c}(S(n))$, to prove our results. 

\begin{theorem}
    [{\cite[Theorem~2.4]{apyinvolution}}]\label{thm:genlemma}
For $n\geq3$, the group topologically generated by the elements
\[
\lbrace \rho_1, \rho_2, A_1^{1}\overline{A_1^{2}},B_1^{1}\overline{B_1^{2}}, C_0^{1}\overline{C_0^{2}}, h_{1,2}\rbrace
\]
contains the Dehn twists $A_i^{j}$, $B_i^{j}$, $C_{i-1}^{j}$ for all $j=1,2,\ldots, n$ and for all $i=1, 2, 3, \ldots$. 
\end{theorem}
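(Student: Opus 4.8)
The plan is to manufacture every standard twist from the six given elements in three stages, working throughout in the compact-open topology so that limits of constructed elements are admissible.

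First I would extract the rotation subgroup from the two reflections. Writing out the permutations that $\rho_1$ and $\rho_2$ induce on the $n$ ends, $\rho_1$ fixes end $1$ and reflects the remaining ends in pairs, while $\rho_2$ reflects about the (anti)diametral axis; a direct angle computation shows that in either parity the product $\rho_1\rho_2$ acts on the ends as rotation by $2\pi/n$, so $\langle\rho_1\rho_2\rangle=\langle R\rangle$ is cyclic of order $n$ and acts on the curve system by $R\cdot a_i^{\,j}=a_i^{\,j+1}$, $R\cdot b_i^{\,j}=b_i^{\,j+1}$, $R\cdot c_{i-1}^{\,j}=c_{i-1}^{\,j+1}$ (indices mod $n$). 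Conjugating the three seed differences by powers of $R$ then spreads them around all the ends, producing $A_1^{\,j}\overline{A_1^{\,j+1}}$, $B_1^{\,j}\overline{B_1^{\,j+1}}$ and $C_0^{\,j}\overline{C_0^{\,j+1}}$ for every $j$, and conjugating $h_{1,2}$ yields the handle shift $h_{j,j+1}=R^{\,j-1}h_{1,2}\overline{R}^{\,j-1}$ between each adjacent pair of ends.

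The heart of the argument is isolating a single twist, say $A_1^1$, from the balanced difference $A_1^1\overline{A_1^2}$. Since the curves $a_1^{\,j}$ are pairwise disjoint, the differences commute and satisfy the single relation $\prod_{j=1}^{n}\bigl(A_1^{\,j}\overline{A_1^{\,j+1}}\bigr)=1$, so no finite algebraic combination of differences can separate a twist; the asymmetry must come from a handle shift. I would use $h_{2,3}$, whose support is a path joining ends $2$ and $3$ and hence is disjoint from the first arm: after replacing $h_{2,3}$ by its inverse if necessary, conjugation by $h_{2,3}$ fixes $a_1^1$ while translating $a_1^2$ outward along its arm toward end $2$, so $h_{2,3}^{\,k}(a_1^2)$ exits every compact set as $k\to\infty$. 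Because a Dehn twist about a curve leaving every compact set converges to the identity in the compact-open topology, this gives
\[
A_1^1=\lim_{k\to\infty} h_{2,3}^{\,k}\bigl(A_1^1\overline{A_1^2}\bigr)\overline{h_{2,3}^{\,k}},
\]
exhibiting the individual twist $A_1^1$ as a topological limit of constructed elements. The same maneuver isolates $B_1^1$ and $C_0^1$.

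Finally, with one twist in hand on the first arm, conjugating by $h_{1,2}$ (whose support \emph{does} meet the first arm) shifts the genus level, so $A_i^1=\overline{h_{1,2}}^{\,i-1}A_1^1\,h_{1,2}^{\,i-1}$ realizes every level, and applying the rotations $R^k$ distributes these over all ends to give every $A_i^{\,j}$; the identical procedure on the $B$- and $C$-seeds produces all $B_i^{\,j}$ and $C_{i-1}^{\,j}$. I expect the main obstacle to be the geometric bookkeeping underpinning the isolation step: one must verify from the embedding of the model surface $\Sigma$ exactly which arms each handle shift $h_{j,j+1}$ acts on, confirm that $h_{2,3}$ genuinely fixes the first arm while pushing $a_1^2$ monotonically out to end $2$, and justify the compact-open convergence by choosing twist representatives with shrinking, escaping support. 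The rotation computation and the level-climbing are then routine by comparison.
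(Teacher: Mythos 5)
The paper itself never proves this statement---it is imported as-is from \cite[Theorem~2.4]{apyinvolution}---but your argument is correct in outline and is essentially the same technique used in that source: extract the rotation $R=\rho_1\rho_2$, isolate a single twist from each difference $X_1^1\overline{X_1^2}$ by conjugating with a handle shift (here $h_{2,3}=Rh_{1,2}\overline{R}$, available precisely because $n\ge 3$) whose support misses the first curve and pushes the second off every compact set, so that the escaping twists converge to the identity in the compact-open topology and the limit lies in the closed subgroup, and then propagate by powers of $h_{1,2}$ and $R$. The one point needing genuine care is the verification you flag yourself: $h_{2,3}$ must fix $a_1^1$, $b_1^1$, $c_0^1$ up to isotopy while its support strip \emph{entirely} contains $a_1^2$, $b_1^2$, $c_0^2$ (a curve only partially inside the strip would not escape under iteration), which holds for the standard model of $S(n)$ and the standard representatives of the handle shifts.
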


We use the following lemma to establish that certain elements in our generating sets are involutions.

\begin{lemma}[{\cite[Lemma~21]{Yildiz2022}}]\label{lemm8}
    If $R$ is an element of order $n$ in a group $G$, and if $x$ and $y$ are elements in $G$ satisfying $Rx\overline{R}=y$, then the order of $R x y^{-1}$ is also $n$.
\end{lemma}
Armed with Theorem~\ref{thm:genlemma} and Lemma~\ref{lemm8}, we now state and prove our main theorems. Note that we use the simplified notation for the statements and proofs of the theorems.

\begin{thmA}\label{thm:A}
For any odd integer $n \geq 17$, the mapping class group $\mod(S(n))$ is topologically generated by four involutions.
\end{thmA}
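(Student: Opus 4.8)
The plan is to exhibit four explicit involutions whose topologically generated subgroup contains the six elements listed in Theorem~\ref{thm:genlemma}, namely $\rho_1, \rho_2, A_1^1\overline{A_1^2}, B_1^1\overline{B_1^2}, C_0^1\overline{C_0^2}, h_{1,2}$, together with at least one element projecting onto a generating set of $\Sym_n$. Since Theorem~\ref{thm:genlemma} guarantees that these six elements topologically generate all the Dehn twists $A_i^j, B_i^j, C_{i-1}^j$, and since those Dehn twists together with the required handle shifts topologically generate $\overline{\pmod_\mathrm{c}(S(n))}\rtimes\Z^{n-1} = \pmod(S(n))$ (using Corollary~6 of~\cite{arapatevla} as stated in the excerpt), recovering the six elements plus a generator of $\Sym_n$ suffices to topologically generate all of $\mod(S(n))$. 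So the whole argument reduces to a constructive design problem followed by a verification.

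First I would define the four candidate involutions. Two of them should be rotational symmetries: the natural choice is to take $\rho_1$ and $\rho_2$ themselves (the $\pi$-rotations described before Theorem~\ref{thm:genlemma}), which are manifestly order-two. The remaining two involutions $\sigma_3, \sigma_4$ must be engineered so that suitable words in the four generators reproduce $A_1^1\overline{A_1^2}$, $B_1^1\overline{B_1^2}$, $C_0^1\overline{C_0^2}$, a handle shift $h_{1,2}$, and a permutation generating $\Sym_n$. The key mechanism I would exploit is Lemma~\ref{lemm8}: if $\rho$ is one of my involutions and I want a product like $A_1^1\overline{A_1^2}$ to itself be an involution or to be realized by a short word, I arrange that $\rho$ conjugates $A_1^1$ to $A_1^2$ (i.e. $\rho$ swaps the relevant curves on ends $1$ and $2$), so that $\rho\cdot(A_1^1\overline{A_1^2})$ or products of the form $\rho x y^{-1}$ come out as involutions. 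Concretely I would let one involution be a reflection that swaps ends $1\leftrightarrow 2$ while reversing orientation on the genus chain, designed so that conjugation sends the $a$-, $b$-, $c$-curves on end $1$ to their counterparts on end $2$; then the composite of this reflection with $\rho_1$ (or with $R$-type rotations built from $\rho_1,\rho_2$) produces both the needed Dehn-twist differences and a transposition-type permutation. The fourth involution would be tailored to inject a handle shift: following the standard trick, I would write $h_{1,2}$ as a product of two involutions (a handle shift is a product of two reflections of the model surface $\Sigma$), letting $\sigma_4$ be one reflection and obtaining $h_{1,2}$ as $\sigma_4\cdot(\text{reflection recoverable from }\rho_1,\rho_2,\sigma_3)$.

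The parity split between Theorems A and B, and the numerical thresholds $n\geq 17$ (odd) and $n\geq 16$ (even), tell me that $\rho_2$ behaves differently depending on whether $n/2$ is an integer, so the precise curves swapped by the reflection involutions differ in the two cases; this is why the two theorems need separate generating sets, and in the odd case I expect to use $\rho_2$ as a rotation \emph{through} the middle end rather than between two ends. The role of the lower bound on $n$ is presumably to guarantee enough ends that the rotation $R$ (recovered as a product of $\rho_1$ and a conjugate of $\rho_2$, or of the two reflection involutions) has large enough order to let products with the reflections generate all of $\Sym_n$ and to separate the supports of the various curves so that the conjugation relations are clean; I would verify that $n\geq 17$ is exactly what is needed for the permutation part to surject onto $\Sym_n$ and for no unwanted overlaps to occur among the moved curves.

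The main obstacle, as flagged in Remark~\ref{rem:minimality}, is the interaction between the handle shift and the permutation subgroup: handle shifts are not compatible with the rotational involutions in the efficient way half-twists would be, so fitting $h_{1,2}$ into the four-generator package without spending a fifth involution is the crux. Concretely, the hard step is to show that after choosing $\sigma_3,\sigma_4$ to carry the Dehn-twist and permutation data, I can still extract a single handle shift $h_{1,2}$ with prescribed attracting/repelling ends from words in the four involutions — and simultaneously check that all four chosen elements genuinely square to the identity (which is where Lemma~\ref{lemm8} does the bookkeeping, by certifying each constructed product $\rho x y^{-1}$ is an involution once I verify the conjugation relation $\rho x\rho = y$ for the relevant curves). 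Once the six elements of Theorem~\ref{thm:genlemma} and a $\Sym_n$-generator are all exhibited as words in $\{\rho_1,\rho_2,\sigma_3,\sigma_4\}$, and each generator is confirmed to be an involution, the theorem follows.
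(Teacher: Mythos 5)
Your reduction framework is the right one, and it coincides with the paper's: keep $\rho_1,\rho_2$ as two of the four involutions, recover the six elements of Theorem~\ref{thm:genlemma} plus an element projecting to a transposition, and certify involutions via Lemma~\ref{lemm8}. But everything after that reduction --- which is the entire mathematical content of the theorem --- is left as an acknowledged ``design problem,'' and the design you sketch would not close within the budget of four. You allocate one involution ($\sigma_3$) to a reflection-type swap of ends $1\leftrightarrow 2$ meant to produce the twist differences, and the other ($\sigma_4$) to a reflection of the model surface $\Sigma$ so that $h_{1,2}$ becomes a product of two reflections. Two concrete problems. First, knowing that $\sigma_3 A_1^1\sigma_3 = A_1^2$ does not put $A_1^1\overline{A_1^2}$ in your group; for that, the twisted element $\rho\, A_1^1\overline{A_1^2}$ must itself be a generator (this is how Lemma~\ref{lemm8} is actually used), and if you spend $\sigma_3$ on the $A$-difference alone, the $B$- and $C$-differences and the transposition remain unaccounted for --- your vague claim that composites of $\sigma_3$ with $\rho_1$ ``produce both the needed Dehn-twist differences and a transposition'' has no mechanism behind it, since products of finite-order symmetries are not products of Dehn twists. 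Second, the two-reflections factorization of a handle shift ($H=\tau_2\tau_1$) is exactly what the paper uses for the Loch Ness Monster and Jacob's Ladder surfaces, where there are no other ends; on $S(n)$ with $n\geq 17$ any such reflection must also act on the remaining $n-2$ ends, and you give no construction showing it extends to an involution of all of $S(n)$ compatible with your other generators. You flag this as ``the crux,'' which is accurate --- but a proof cannot leave the crux unresolved.

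The missing idea, and what the paper actually does, is to pack \emph{all} of the twist and shift data into a single twisted involution and then extract the pieces. The paper takes $I=\{\rho_1,\rho_2,\rho_3F_1,\tau\}$ with $F_1 = A_1C_1B_4\overline{B_6}\,\overline{C_8}\,\overline{A_9'}\,h_{\frac{n+1}{2}+4,\frac{n+1}{2}+5}$ a product of six Dehn twists and the handle shift, and $\rho_3=R^4\rho_1\overline{R^4}$ chosen so that $\rho_3$ carries the triple $A_1C_1B_4$ to $A_9'C_8B_6$ and inverts the handle shift, making $\rho_3F_1$ an involution by precisely the mechanism of Lemma~\ref{lemm8}; the fourth involution $\tau$ supplies the transposition. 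The real work, entirely absent from your outline, is the extraction: a cascade of conjugations ($F_2=F_1^{R^2}$, $F_3=F_1^{F_1F_2}$, $F_5=F_3^{F_3\overline{F_4}}$, and so on) whose arranged cancellations isolate first $B_{i+1}\overline{C_i}$ and $C_i\overline{B_i}$, then $B_1\overline{B_2}$, $C_1\overline{C_2}$, $A_1\overline{A_2}$, and finally the bare handle shift. This is also where the hypothesis $n\geq 17$ genuinely enters: the supports of $F_1$, its $R$-translates, and its $\rho_3$-image must not wrap around and interfere; the generation of $\Sym_n$ by an $n$-cycle and a transposition needs no lower bound at all, so your hedge on that point resolves the wrong way. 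Without this packing-and-extraction step, or an honest substitute for it, your proposal is a plan for a proof rather than a proof.
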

\begin{proof}
We define our generating set of four involutions as $I = \{\rho_1, \rho_2, \rho_3 F_1,\tau\}$, where
\[
F_1 = A_1 C_1 B_4 \overline{B_6} \mkern3mu\overline{C_8}\mkern3mu \overline{A_9'} h_{\frac{n+1}{2}+4, \frac{n+1}{2}+5},
\]
$\rho_3 = R^4\rho_1\overline{R^4}$, and $\tau$ is a homeomorphism that projects to $(1\,2)$ in $\Sym_n$. Such a $\tau$ always exists, see~\cite{apyinvolution,tez}. Note that $R = \rho_1\rho_2$ and $\rho_3 = {\rho_1}^{{R^4}}$, and are therefore in $G$. 
Let $G$ be the subgroup of $\mod(S(n))$ topologically generated by $I$. Since $\rho_1$ and $\rho_2$ are in $G$, $R$ and $\rho_3$ are also in $G$. It follows that $F_1$ is in $G$.

We start by showing that $A_1\overline{A_2}$, $B_1\overline{B_2}$, $C_1\overline{C_2}$ and $A_1'\overline{A_2'}$ are in $G$. We then show that the element $h_{\frac{n+1}{2}+4, \frac{n+1}{2}+5}$ is in $G$, and thus $h_{1,2}$ can be separated from the Dehn twists and is in $G$. By Theorem~\ref{thm:genlemma}, $G$ contains $A_i^j,B_i^j$ and $C_{i-1}^j$, for all $j=1,2,\ldots, n$ and for all $i=1, 2, 3, \ldots$, and thus $\overline{\pmod_\mathrm{c}(S(n))}< G$ by [\citenum{tez}, Proposition 6.1.10]. 

\begin{remark}
The following steps involve a series of conjugations that rely on the geometric configuration of the curves, in particular disjointness and intersection numbers, and the braid relation. For brevity, the detailed calculations demonstrating how terms cancel or transform are omitted. The key idea is that each conjugating element is chosen to commute with most terms of the target element, acting non-trivially only on specific components.
\end{remark}

\noindent Let 
\[
F_2 = \conj{F_1}{R^2} = A_3 C_3 B_6 \overline{B_8} \mkern3mu\overline{C_{10}}\mkern3mu \overline{A_{11}'} h_{\frac{n+1}{2}+6, \frac{n+1}{2}+7} \in G.
\]
Conjugating $F_1$ by $F_1F_2$ gives 
\[F_3 = \conj{F_1}{F_1 F_2} = A_1 C_1 C_3 \overline{B_6}\mkern3mu \overline{B_8}\mkern3mu \overline{A_9'} h_{\frac{n+1}{2}+4, \frac{n+1}{2}+5} \in G.
\]
Then let 
\[
F_4 = \conj{F_3}{R^2} = A_3 C_3 C_5 \overline{B_8}\mkern3mu \overline{B_{10}}\mkern3mu \overline{A_{11}'} h_{\frac{n+1}{2}+6, \frac{n+1}{2}+7} \in G,
\] and 
\[F_5 = \conj{F_3}{F_3\overline{F_4}} = A_1C_1C_3\overline{C_5}\mkern3mu \overline{B_8}\mkern3mu \overline{A_9'} h_{\frac{n+1}{2}+4, \frac{n+1}{2}+5}\in G.
\]
Note that the product $\overline{F_3} F_5 = B_6 \overline{C_5}$ is in $G$. Conjugating by powers of $R$, we see that $B_{i+1}\overline{C_i}$ is in $G$ for all applicable $i$.  Now, 
\[
F_6 = \conj{F_5}{{R^3}} = A_4C_4C_6\overline{C_8}\mkern3mu\overline{B_{11}}\mkern3mu \overline{A_{12}'} h_{\frac{n+1}{2}+7, \frac{n+1}{2}+8}\in G.
\] 
Conjugating $F_5$ by $F_5F_6$ yields
\[
F_7 = \conj{F_5}{F_5F_6} = A_1C_1C_3\overline{C_5}\mkern3mu \overline{C_8}\mkern3mu \overline{A_9'} h_{\frac{n+1}{2}+4, \frac{n+1}{2}+5} \in G.
\]

The product $\overline{F_5} F_7$ simplifies to $B_8 \overline{C_8}$, and is therefore in $G$. Its inverse, $C_8 \overline{B_8}$, is also in $G$. Conjugating this element by powers of $R$, it is clear that $C_i \overline{B_i}\in G$ for all $i$.

Combining these elements give
\[
 (B_1\overline{C_1})(C_1\overline{B_2})=B_1\overline{B_2}\in G,
\] 
and
\[
(C_1\overline{B_2})(B_2\overline{C_2})=C_1\overline{C_2} \in G.
\]

Since $B_i\overline{B_{i+1}}$ and $C_i\overline{C_{i+1}}$ are in $G$, we can simplify $F_1$. Notice that 
\[
(B_6\overline{B_5})(B_5\overline{B_4}) = B_6\overline{B_4} \in G,
\]
and
\[
(C_8\overline{C_7})(C_7\overline{C_6})\cdots(C_3\overline{C_2})(C_2\overline{C_1}) = C_8\overline{C_1}\in G.
\]
Therefore, 
\[
F_8= F_1(C_8\overline{C_1})(B_6\overline{B_4}) = A_1\overline{A'_9}h_{\frac{n+1}{2}+4,\frac{n+1}{2}+5} \in G.
\]
Conjugating $F_8$ by 
\[
F_8(B_1\overline{B_2}) =  A_1\overline{A'_9}h_{\frac{n+1}{2}+4,\frac{n+1}{2}+5}B_1\overline{B_2}
\]
gives 
\[
F_9 = \conj{F_8}{F_8(B_1\overline{B_2})} = B_1\overline{A_9'}h_{\frac{n+1}{2}+4,\frac{n+1}{2}+5} \in G,
\]
and the product $F_8\overline{F_{9}} = A_1\overline{B_1}$ is in $G$.  
Now,  
\[
(A_1\overline{B_1})(B_1\overline{B_2})(B_2\overline{A_2}) = A_1\overline{A_2} \in G.
\]
Note that we can apply the same procedure to show that $A'_1\overline{B_1}$ and $A'_1\overline{A'_2}$ are also in $G$.
Now note that 
\[
(B_1\overline{B_2})(B_2\overline{A'_2})(A'_2\overline{A'_3})\cdots(A'_7\overline{A_8'})(A_8'\overline{A_9'}) = B_1\overline{A_9'} \in G.
\]
Therefore, 
\[
(A_9'\overline{B_1})F_{9} = h_{\frac{n+1}{2}+4,\frac{n+1}{2}+5} \in G.
\]
It follows from conjugation by powers of $R$ that $h_{1,2}$ is in $G$.

Since $A_1\overline{A_2}$, $B_1\overline{B_2}$, $C_1\overline{C_2}$, $R$, and $h_{1,2}$ are in $G$, Theorem~\ref{thm:genlemma} and [\citenum{tez}, Proposition 6.1.10] implies that $G$ contains the closure of the compactly supported mapping class group. Since $R$ projects to $(1\,2\,\dots\,n)$ and $\tau$ projects to $(12)$ in $\Sym_n$, they generate it by a classical result. It follows that $G$ must be $\mod(S(n))$.

It remains to check that $\rho_3 F_1$ is an involution. A direct calculation shows that 
\begin{align*}
    \rho_3(A_1C_1B_4)\rho_3 = A_9'C_8B_6, \\
    \rho_3(\overline{B_6} \mkern3mu\overline{C_8}\mkern3mu \overline{A_9'}) \rho_3 = \overline{A_1}\mkern3mu \overline{C_1}\mkern3mu \overline{B_4},
\end{align*}
and 
\[
\rho_3 h_{\frac{n+1}{2}+4, \frac{n+1}{2}+5} \rho_3 = h_{\frac{n+1}{2}+5, \frac{n+1}{2}+4} = \overline{h_{\frac{n+1}{2}+4, \frac{n+1}{2}+5}}. 
\]
It immediately follows that
\[
(\rho_3 F_1)(\rho_3 F_1) = (\rho_3 F_1\rho_3) F_1 = \overline{h_{\frac{n+1}{2}+4, \frac{n+1}{2}+5}}(A_9'C_8B_6)(\overline{A_1}\mkern3mu \overline{C_1}\mkern3mu \overline{B_4})(B_4C_1A_1   )(\overline{B_6} \mkern3mu\overline{C_8}\mkern3mu \overline{A_9'}) h_{\frac{n+1}{2}+4, \frac{n+1}{2}+5} = 1,
\]
and that $\rho_3F_1$ is an involution.
\end{proof}

\begin{thmB}\label{thm:B}
For any even integer $n \geq 16$, the mapping class group $\mod(S(n))$ is topologically generated by four involutions.
\end{thmB}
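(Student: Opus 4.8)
The plan is to reproduce the architecture of the proof of Theorem~\ref{thm:A}, changing only the parity-dependent data. I would again take a four-element set $I=\{\rho_1,\rho_2,\rho_3F_1,\tau\}$, with $\rho_1$ the $\pi$-rotation through the first end, $\rho_2$ the $\pi$-rotation whose axis passes \emph{between} the ends numbered $\frac{n}{2}$ and $\frac{n}{2}+1$ (the even-$n$ shape of $\rho_2$), $\tau$ a homeomorphism projecting to a $2$-cycle in $\Sym_n$, and $F_1$ a product of six Dehn twists and one handle shift of the same combinatorial shape $A_1C_1B_4\overline{B_6}\mkern3mu\overline{C_8}\mkern3mu\overline{A_9'}\,h_{?,?}$ as before. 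As in Theorem~\ref{thm:A}, $\rho_1,\rho_2\in G$ give $R=\rho_1\rho_2\in G$ and $\rho_3\in G$, whence $F_1=\overline{\rho_3}(\rho_3F_1)\in G$.

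The combinatorial extraction is then identical to the odd case: I would form $F_2=\conj{F_1}{R^2}$, $F_3=\conj{F_1}{F_1F_2}$, and so on through $F_9$, using at each stage that the conjugator commutes with all but one or two of the constituent twists, so that the products $\overline{F_3}F_5$, $\overline{F_5}F_7$, $\dots$ collapse to single difference twists such as $B_{i+1}\overline{C_i}$ and $C_i\overline{B_i}$. Conjugating these by powers of $R$ and recombining yields $B_1\overline{B_2}$, $C_1\overline{C_2}$, $A_1\overline{A_2}$ and $A_1'\overline{A_2'}$, after which the handle shift is isolated and a final conjugation by powers of $R$ places a handle shift between adjacent ends, hence $h_{1,2}$, in $G$. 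This half of the argument sees only the commutation/braid relations among the curves and the $R$-action on genus levels, so it transfers verbatim. Once $A_1\overline{A_2},B_1\overline{B_2},C_1\overline{C_2},R,h_{1,2}\in G$, Theorem~\ref{thm:genlemma} and [\citenum{tez}, Proposition~6.1.10] give $\overline{\pmod_\mathrm{c}(S(n))}<G$; since $R$ (an $n$-cycle) and $\tau$ (a $2$-cycle) generate $\Sym_n$, the exact sequence forces $G=\mod(S(n))$.

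The step I expect to be the real obstacle — and the reason the even case demands its own statement — is the verification, via Lemma~\ref{lemm8}, that $\rho_3F_1$ is an involution, i.e. that $\rho_3F_1\rho_3=\overline{F_1}$. On the genus levels a $\pi$-rotation reflects the chain, so the six Dehn twists can be paired into inverse couples exactly as in the odd case. The handle-shift term is the delicate point: $\rho_3 h_{a,a+1}\rho_3=\overline{h_{a,a+1}}$ requires $\rho_3$ to interchange the adjacent ends $a$ and $a+1$. If $\rho_3$ is a conjugate $R^m\rho_1\overline{R^m}$ of the through-an-end rotation, its action on ends is $k\mapsto(2+2m)-k$ modulo $n$, and interchanging $a,a+1$ forces $2a\equiv 2m+1$ modulo $n$; for even $n$ the right-hand side is odd and the left even, so no such $a$ exists. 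In Theorem~\ref{thm:A} this congruence \emph{is} solvable (producing the subscripts $\frac{n+1}{2}+4,\frac{n+1}{2}+5$), but for even $n$ it is not — and one checks that using a non-adjacent handle shift $h_{a,\,10-a}$ instead does not help, since the $R$-orbit of a handle shift whose end-difference shares a common factor with $n$ spans only a proper sublattice of the handle-shift group and so cannot recover $h_{1,2}$.

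I would therefore resolve the obstacle by building $\rho_3$ from the \emph{between-ends} rotation, taking $\rho_3=R^{m}\rho_2\overline{R^{m}}$ for a suitable $m$; its end-action then has the form $k\mapsto(\text{odd})-k$ modulo $n$ and hence \emph{can} swap an adjacent pair of ends. The handle-shift subscripts in $F_1$ are chosen to be exactly that swapped pair, and the curve indices are recentred on the correspondingly shifted axis so that the genus-level reflection still pairs $A_1C_1B_4$ against $\overline{B_6}\mkern3mu\overline{C_8}\mkern3mu\overline{A_9'}$ in inverse couples and inverts the handle shift. After this recalibration the identity $(\rho_3F_1)^2=1$ telescopes precisely as in Theorem~\ref{thm:A}, and the generation argument goes through unchanged. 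Pinning down this symmetric placement of the curves and the handle shift relative to the axis of the between-ends rotation is the one genuinely new ingredient; everything else is inherited from the odd case.
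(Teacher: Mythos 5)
Your proposal matches the paper's proof in every essential respect: the paper indeed takes $\rho_3=R^4\rho_2\overline{R^4}$ (the conjugate of the \emph{between-ends} rotation, exactly the parity fix you identified) together with the recentred element $F_1=A_1C_1B_4\overline{B_5}\mkern3mu\overline{C_7}\mkern3mu\overline{A_8'}h_{\frac{n}{2}+4,\frac{n}{2}+5}$, so that $\rho_3$ swaps the adjacent ends $\frac{n}{2}+4,\frac{n}{2}+5$ and pairs the six twists into inverse couples, and the extraction of $B_i\overline{C_i}$, $B_{i+1}\overline{C_i}$, $B_1\overline{B_2}$, $C_1\overline{C_2}$ then proceeds by the same conjugation scheme as in Theorem~\ref{thm:A} (with only minor index bookkeeping differences). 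Your parity analysis of why $R^m\rho_1\overline{R^m}$ cannot swap adjacent ends for even $n$ is exactly the obstruction the paper's change of generator circumvents, so the proposal is correct and follows essentially the same route.
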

\begin{proof}
Our strategy is the same as in the proof of Theorem~\ref{thm:A}. Let $G$ be the subgroup topologically generated by $I = \{\rho_1,\rho_2,\rho_3F_1,\tau\}$, where
\begin{align*}
    F_1 = A_1C_1B_4\overline{B_5}\mkern3mu\overline{C_7}\mkern3mu\overline{A_8'}h_{\frac{n}{2}+4,\frac{n}{2}+5},
\end{align*}
and $\rho_3 = R^4\rho_2\overline{R^4}$.
Let 
\begin{align*}
    F_2 = \conj{F_1}{R^2} = A_3C_3B_6\overline{B_7}\mkern3mu\overline{C_{9}}\mkern3mu\overline{A_{10}'}h_{\frac{n}{2}+6,\frac{n}{2}+7} \in G.
\end{align*}
Conjugating $F_1$ by $F_1F_2$, we obtain
\begin{align*}
    F_3 = \conj{F_1}{F_1F_2} = A_1C_1C_3\overline{B_5}\mkern3mu\overline{B_7}\mkern3mu\overline{A_8'}h_{\frac{n}{2}+4,\frac{n}{2}+5} \in G.
\end{align*}
Now, let 
\begin{align*}
    F_4 = \conj{F_3}{R^2} = A_3C_3C_5\overline{B_7}\mkern3mu\overline{B_{9}}\mkern3mu\overline{A_{10}'}h_{\frac{n}{2}+6,\frac{n}{2}+7} \in G.
\end{align*}
By conjugating $F_3$ with $F_3\overline{F_4}$ we obtain
\begin{align*}
    F_5 = \conj{F_3}{F_3\overline{F_4}} = A_1C_1C_3\overline{C_5}\mkern3mu\overline{B_7}\mkern3mu\overline{A_8'}h_{\frac{n}{2}+4,\frac{n}{2}+5} \in G.
\end{align*}
Note that $\overline{F_3}F_5 = B_5\overline{C_5} \in G$ which implies that $B_i\overline{C_i}$, and $C_i\overline{B_i}$ are both in $G$ through conjugations by powers of $R$. Then,
\begin{align*}
    &F_6 = F_1(C_7\overline{B_7}) = A_1C_1B_4\overline{B_5}\mkern3mu\overline{B_7}\mkern3mu\overline{A_8'}h_{\frac{n}{2}+4,\frac{n}{2}+5},\\
    &F_7=\conj{F_6}{R^2}=A_3C_3B_6\overline{B_7}\mkern3mu\overline{B_9}\mkern3mu\overline{A_{10}'}h_{\frac{n}{2}+6,\frac{n}{2}+7},\\
    &F_8=\conj{F_6}{F_6F_7}=A_1C_1C_3\overline{B_5}\mkern3mu\overline{B_7}\mkern3mu\overline{A_8'}h_{\frac{n}{2}+4,\frac{n}{2}+5},
\end{align*}
Therefore, $F_6\overline{F_8}=B_4\overline{C_3}$ is in $G$. It follows that $B_{i+1}\overline{C_i}$ is in $G$ for all $i\in \{1,\dots,n\}$ (note that $B_{n+1}=B_1$).

Thus, similar to the previous theorem, we can combine these elements so that $B_1\overline{B_2}$ and $C_1\overline{C_2}$ is in $G$. The rest of the proof follows the same steps as in the proof of Theorem~\ref{thm:A}.

\end{proof}

\subsection{The Jacob's Ladder Surface}
In this section, we focus on the Jacob's Ladder surface, the infinite-genus surface with two ends, both accumulated by genus. We will show that the topological generating set established in~\cite{mingen} can be so that each generator is an involution. We  use the model depicted in Figure~\ref{fig:curves}.

\begin{figure}[htbp]
      \centering
      \includegraphics[width=0.50\textwidth]{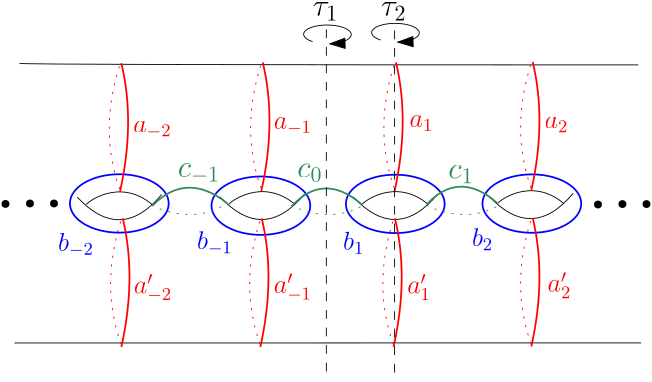}
      \caption{ A model for the Jacob's Ladder surface, showing the indexed curves and the rotations 
      $\tau_1$ and $\tau_2$.}
      \label{fig:curves}
\end{figure}
Observe that
\begin{align*}
    H= \tau_2\tau_1
\end{align*}
is a handle shift whose attracting end is $+\infty$ and repelling end is $-\infty$.

Throughout this subsection, the Jacob's Ladder surface is denoted by $S$.

\begin{theorem}[{\cite[Theorem 3.10]{mingen}}]\label{lem:jaclem}
    Let $S$ be the Jacob's Ladder surface. Then $\mathrm{Map}(S)$ is topologically generated by the set 
    \[
    \{\tau_1,\tau_2,A_1A'_6C_1B_3\overline{B_{11}}\mkern3mu\overline{C_{12}}\mkern3mu\overline{A'_8}\mkern3mu\overline{A_{13}}\}.
    \]
\end{theorem}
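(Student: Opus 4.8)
The plan is to mirror the strategy of Theorems~\ref{thm:A} and~\ref{thm:B}, adapted to the single bi-infinite chain of genera of the Jacob's Ladder surface. Write $F = A_1A'_6C_1B_3\overline{B_{11}}\,\overline{C_{12}}\,\overline{A'_8}\,\overline{A_{13}}$ for the third generator and let $G$ be the subgroup topologically generated by $I=\{\tau_1,\tau_2,F\}$. Since $\tau_1,\tau_2\in G$, the handle shift $H=\tau_2\tau_1$ lies in $G$ as well, and conjugation by powers of $H$ translates the indices of all the standard curves $a_i,a'_i,b_i,c_i$ along the chain (that is, $\conj{A_i}{H}=A_{i+1}$, and similarly for the other families). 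Consequently, whenever an element $X$ built from these curves is shown to lie in $G$, every index-shifted copy $\conj{X}{H^k}$ lies in $G$ too. The goal is to isolate from $F$ the single-index difference twists $A_i\overline{A_{i+1}}$, $B_i\overline{B_{i+1}}$, $C_i\overline{C_{i+1}}$ and their primed analogues, after which a generation theorem produces all Dehn twists and hence the closure of the compactly supported subgroup.

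The isolation step proceeds just as in the proof of Theorem~\ref{thm:A}. I would form shifted copies $F_2=\conj{F}{H^k}$ and then take iterated conjugations of the form $\conj{F}{F F_2}$, $\conj{F_3}{F_3\overline{F_4}}$, and so on, together with products $\overline{F_i}F_j$ of the resulting elements. Because each factor of $F$ is a Dehn twist about a curve disjoint from (hence commuting with) most of the curves appearing in a given conjugator, these products telescope: all but one or two factors cancel, leaving elements such as $B_{i+1}\overline{C_i}$ and $C_i\overline{B_i}$, and ultimately $B_i\overline{B_{i+1}}$ and $C_i\overline{C_{i+1}}$, exactly as $F_3,\dots,F_9$ did in Theorem~\ref{thm:A}. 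Chaining these differences then produces the $A$- and $A'$-difference twists, via identities of the form $(A_1\overline{B_1})(B_1\overline{B_2})(B_2\overline{A_2})=A_1\overline{A_2}$. Since $F$ carries no explicit handle-shift factor, this part is in fact slightly cleaner than in Theorems~\ref{thm:A} and~\ref{thm:B}: there is no handle shift to separate off at the end, because $H$ is already available directly as $\tau_2\tau_1$.

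With the difference twists $A_i\overline{A_{i+1}}$, $B_i\overline{B_{i+1}}$, $C_i\overline{C_{i+1}}$ (and their primed versions) and the handle shift $H$ all in $G$, together with the rotations $\tau_1,\tau_2$, I would apply the Jacob's Ladder analogue of Theorem~\ref{thm:genlemma} (in which $\tau_1,\tau_2$ play the role of $\rho_1,\rho_2$ and $H$ plays the role of $h_{1,2}$) to conclude that $G$ contains every Dehn twist $A_i,A'_i,B_i,C_i$ of the standard system, whence $\overline{\pmod_\mathrm{c}(S)}<G$ by [\citenum{tez}, Proposition~6.1.10]. Since the Jacob's Ladder surface is $S(2)$, the decomposition $\pmod(S)=\overline{\pmod_\mathrm{c}(S)}\rtimes\Z$ together with the presence of the handle shift $H$ (whose attracting and repelling ends are $+\infty$ and $-\infty$) yields $\pmod(S)<G$. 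Finally, $\tau_1$ interchanges the two ends and so projects to the nontrivial element of $\Sym_2$; combined with $\pmod(S)<G$ and the exact sequence $1\to\pmod(S)\to\mathrm{Map}(S)\to\Sym_2\to1$, this forces $G=\mathrm{Map}(S)$.

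The main obstacle is the bookkeeping in the isolation step: at each stage one must verify that the conjugating element commutes with all the intended factors, so that the product collapses to the desired single difference twist rather than introducing a new curve. This is purely a matter of reading off disjointness and intersection data from the configuration in Figure~\ref{fig:curves} and applying the resulting commutation and braid relations; but the larger index spread here (indices up to $13$, with both primed and unprimed $A$-curves present in a single element) makes the telescoping chains longer than in Theorem~\ref{thm:A}, so the delicate point is tracking which difference twists have already been produced and in what order they must be multiplied to recover $A_i\overline{A_{i+1}}$ and $A'_i\overline{A'_{i+1}}$.
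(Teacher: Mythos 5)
First, note what you are proving against: this paper does not prove the statement at all. Theorem~\ref{lem:jaclem} is imported verbatim from the companion paper \cite[Theorem~3.10]{mingen} and is used as a black box in the proof of Theorem~\ref{thm:C}. The closest internal analogues of a proof are the arguments for Theorem~\ref{thm:A} and, especially, Theorem~\ref{thm:D} (the Loch Ness Monster case, where the third generator is likewise a pure product of Dehn twists and the handle shift $H=\tau_1\tau_2$ supplies the index shifts). Your high-level skeleton matches that methodology, and your end-game is sound: difference twists plus $H$ feed into a generation lemma, $\overline{\pmod_\mathrm{c}(S)}<G$ follows from \cite{tez}, the decomposition $\pmod(S)=\overline{\pmod_\mathrm{c}(S)}\rtimes\Z$ absorbs the pure mapping class group, and $\tau_1$ covers the $\Sym_2$ factor.

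The genuine gap is that the core of any such theorem is the explicit chain of conjugations, and you defer exactly that. Asserting that products ``telescope \dots\ exactly as $F_3,\dots,F_9$ did in Theorem~\ref{thm:A}'' is not transferable: those cancellations depend on the precise index spacing and disjointness pattern of the curves involved, and here the configuration $a_1, a_6', c_1, b_3, b_{11}, c_{12}, a_8', a_{13}$ is different from both Theorem~\ref{thm:A} (whose $F_1$ contains a handle-shift factor, changing how conjugates interact) and Theorem~\ref{thm:D}. Moreover, the paper's Theorem~\ref{thm:D} computation is not mere commutation bookkeeping; steps such as $F_3=\conj{F_2}{F_2F_1}$ explicitly use braid relations where curves intersect once, so one must exhibit the actual conjugators and verify intersection numbers, not just disjointness. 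A second, lesser issue: you invoke a ``Jacob's Ladder analogue of Theorem~\ref{thm:genlemma}'' that is neither stated nor proved anywhere in this paper (the stated analogue, Lemma~\ref{lem:a1a2}, is for the Loch Ness Monster and is itself quoted from \cite{mingen}). Since the statement you are proving is itself a theorem of \cite{mingen}, leaning on an unstated auxiliary lemma from the same source without checking its logical independence risks circularity; a self-contained proof would have to establish that generation lemma for the two-ended surface as well.
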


Since $\tau_1$ and $\tau_2$ are involutions, it remains to modify the last element to be an involution.

\begin{thmC}\label{thm:C}
     Let $S$ be the Jacob's Ladder surface. Then $\mod(S)$ is topologically generated by three involutions.
\end{thmC}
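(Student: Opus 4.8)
The plan is to follow the template of Theorem~\ref{thm:A}: retain the two involutions $\tau_1,\tau_2$ and replace the single non-involutive generator
\[
F = A_1A'_6C_1B_3\overline{B_{11}}\mkern3mu\overline{C_{12}}\mkern3mu\overline{A'_8}\mkern3mu\overline{A_{13}}
\]
of Theorem~\ref{lem:jaclem} by a product $\sigma F$, where $\sigma$ is a carefully chosen involution lying in the subgroup $\gen{\tau_1,\tau_2}$. Since $H=\tau_2\tau_1$ is a handle shift, $\gen{\tau_1,\tau_2}$ is infinite dihedral, and its involutions are exactly the reflections $\conj{\tau_i}{H^k}=H^k\tau_i\overline{H^k}$ for $i\in\{1,2\}$ and $k\in\Z$; each such element is again a $\pi$-rotation of $S$. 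Choosing $\sigma$ of this form guarantees $\sigma\in\gen{\tau_1,\tau_2}$, so that $G=\gen{\tau_1,\tau_2,\sigma F}$ contains $\overline{\sigma}\,(\sigma F)=F$; by Theorem~\ref{lem:jaclem} this forces $G=\mod(S)$. The whole problem thus reduces to exhibiting one reflection $\sigma$ for which $\sigma F$ is an involution.

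First I would locate the axis of symmetry of $F$. Reading off the indices, the Dehn-twist factors pair up as $a_1\leftrightarrow a_{13}$, $a'_6\leftrightarrow a'_8$, $c_1\leftrightarrow c_{12}$ and $b_3\leftrightarrow b_{11}$, and in each pair the two subscripts are symmetric about genus level $7$ (the half-integer offset of the $c$-curves accounting for the pair $c_1\leftrightarrow c_{12}$). I would therefore take $\sigma$ to be the reflection in $\gen{\tau_1,\tau_2}$ whose fixed axis passes through level $7$, i.e. the element $H^k\tau_i\overline{H^k}$ with $i$ and $k$ chosen so that its center sits at level $7$. With this choice $\sigma$ simultaneously realizes all four swaps, and since $\sigma$ is orientation-preserving it preserves the handedness of every twist, giving
\[
\sigma(A_1A'_6C_1B_3)\sigma = A_{13}A'_8C_{12}B_{11}.
\]

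Writing $x=A_1A'_6C_1B_3$ and $y=A_{13}A'_8C_{12}B_{11}$, the second block of $F$ is exactly $\overline{y}=\overline{B_{11}}\mkern3mu\overline{C_{12}}\mkern3mu\overline{A'_8}\mkern3mu\overline{A_{13}}$ (the curves inside each block are disjoint, so their twists commute and may be reordered), whence $F=x\,\overline{y}$. The identity $\sigma x\sigma = y$ established above is precisely the hypothesis of Lemma~\ref{lemm8} with $\rho=\sigma$, so $\sigma F=\sigma x\overline{y}$ is an involution. Equivalently, $\sigma F\sigma=y\,\overline{x}=\overline{F}$, and hence $(\sigma F)^2=(\sigma F\sigma)F=\overline{F}F=1$ directly. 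Together with the generation statement of the first paragraph, this shows that $\{\tau_1,\tau_2,\sigma F\}$ is a set of three involutions that topologically generates $\mod(S)$.

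I expect the main obstacle to be the geometric bookkeeping behind the displayed identity $\sigma(A_1A'_6C_1B_3)\sigma = A_{13}A'_8C_{12}B_{11}$: one must verify that a single reflection of the model in Figure~\ref{fig:curves} carries $a_1,a'_6,c_1,b_3$ to $a_{13},a'_8,c_{12},b_{11}$ respectively, and — the delicate point — that an axis at level $7$ is genuinely realized by some $H^k\tau_i\overline{H^k}$ rather than falling strictly between two admissible reflection axes of $\gen{\tau_1,\tau_2}$. This is exactly where the specific indices in the generator of Theorem~\ref{lem:jaclem} enter: they must be chosen so that the symmetry center of $F$ coincides with an available reflection axis. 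Once the positions of the curves relative to the axes of $\tau_1$ and $\tau_2$ are fixed, the remaining verification is a routine check of the action of $\sigma$ on the labeled curves.
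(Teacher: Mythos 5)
Your proposal is correct and follows essentially the same route as the paper: the paper's proof takes exactly such a reflection, namely $\tau_3 = H^6\tau_2\overline{H^6}$, observes that $F\in G$ because $\tau_3\in\gen{\tau_1,\tau_2}$ so that $G=\mod(S)$ by Theorem~\ref{lem:jaclem}, and then applies Lemma~\ref{lemm8} to the conjugation identity swapping the two blocks of $F$. If anything, your handedness-consistent formulation $\sigma(A_1A'_6C_1B_3)\sigma = A_{13}A'_8C_{12}B_{11}$ (positive twists to positive twists, since $\sigma$ is orientation-preserving) is the more careful statement of the key identity, which the paper records with inverted twists on the right-hand side.
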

\begin{proof}
    Let 
    \[I = \{\tau_1,\tau_2,\tau_3A_1A'_6C_1B_3\overline{B_{11}}\mkern3mu\overline{C_{12}}\mkern3mu\overline{A'_8}\mkern3mu\overline{A_{13}}\},
    \]
    where $\tau_3 = H^6\tau_2\overline{H^6}$, and $G$ be the subgroup of $\mod(S)$ topologically generated by $I$. It is clear $\tau_3$ is in $G$, thus \[
    A_1A'_6C_1B_3\overline{B_{11}}\mkern3mu\overline{C_{12}}\mkern3mu\overline{A'_8}\mkern3mu\overline{A_{13}} \in G.
    \] By Theorem~\ref{lem:jaclem}, this implies that $G$ is $\mod(S)$.
    It is easy to check that \[
    \tau_3A_1A'_6C_1B_3\tau_3 = A_{13}A'_8C_{12}B_{11},
    \] 
    
    which implies by Lemma~\ref{lemm8} that $\tau_3A_1A'_6C_1B_3\overline{B_{11}}\mkern3mu\overline{C_{12}}\mkern3mu\overline{A'_8}\mkern3mu\overline{A_{13}}$ is an involution and we are done.
\end{proof}

\subsection{The Loch Ness Monster Surface}

Throughout this subsection, the Loch Ness Monster surface is denoted by $S$. Consider the rotations $\tau_1$ and $\tau_2$ defined as the rotation by $\pi$ radians as shown in Figures~\ref{fig:tau1n1} and \ref{fig:tau2n1}. It is clear that the product $\tau_1\tau_2$ is a handle shift, which we will call $H$, whose attracting end and repelling end are the same. One can think of the genera as lying along an infinite chain, bent so that both ends extend in the same direction; the map $H$ then acts by shifting each link (genus) along this chain by one.   

\begin{figure}[htbp]
      \centering
      \includegraphics[width=0.55\textwidth]{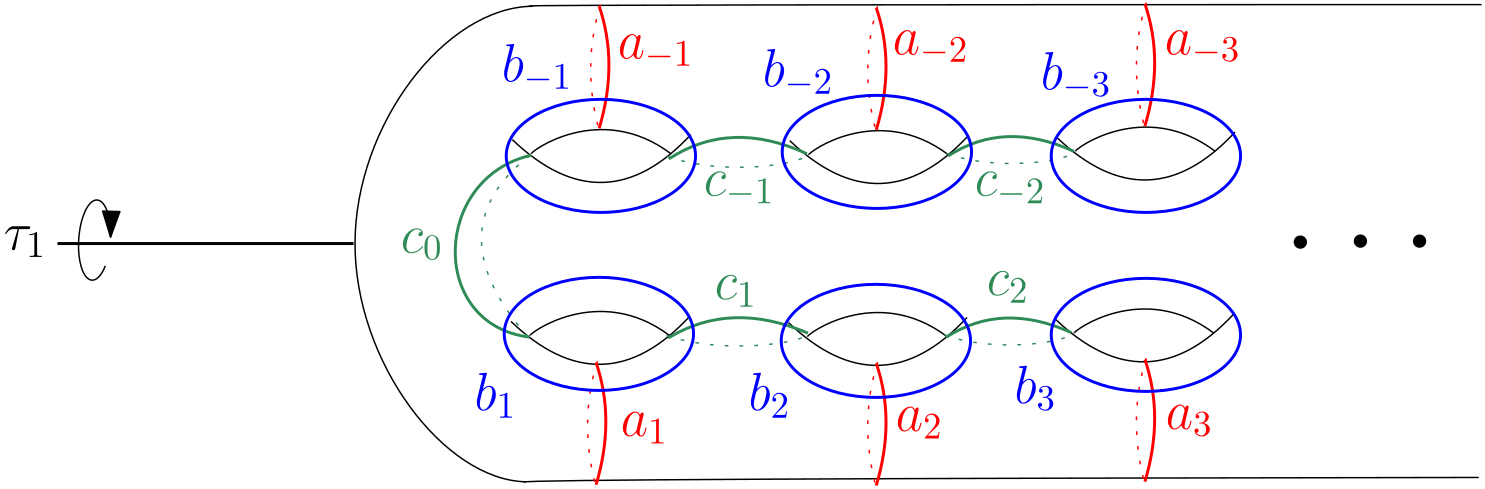}
      \caption{The rotation $\tau_1$.}
      \label{fig:tau1n1}
   \end{figure}
   
\begin{figure}[htbp]
      \centering
      \includegraphics[width=0.55\textwidth]{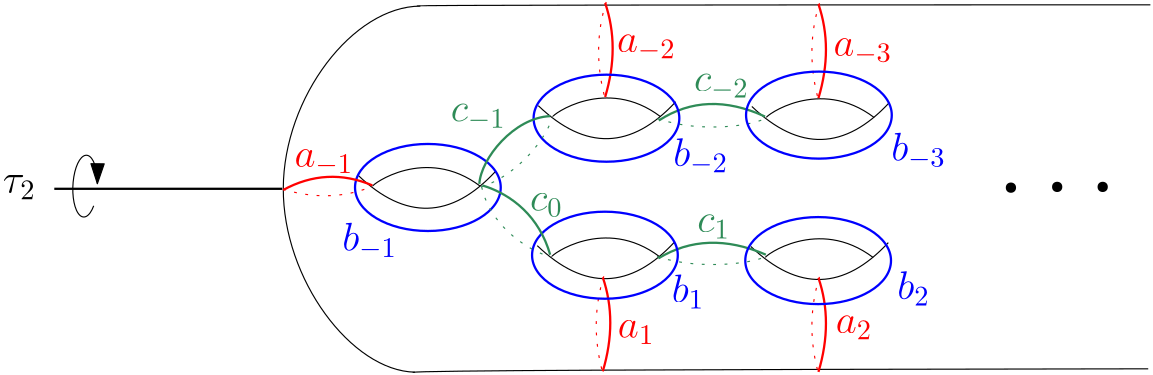}
      \caption{The rotation $\tau_2$.}
      \label{fig:tau2n1}
\end{figure}
   
The following lemma is analogous to Theorem~\ref{thm:genlemma}, and gives us a nice finite generating set for the Dehn twists required to topologically generate $\mod(S)$.

\begin{lemma}[{\cite[Lemma 3.12]{mingen}}]\label{lem:a1a2}
    The subgroup of $\mod(S)$ generated by 
    \[\{H,A_1\overline{A_2},B_1\overline{B_2},C_1\overline{C_2} \}\]
    contains the Dehn twists $A_i,B_i$ and $C_j$ for all $|i| \geq 1$ and $j\in \mathbb{Z}$.
\end{lemma}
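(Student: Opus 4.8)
The plan is to use the handle shift $H$ to propagate the three given ``difference'' elements along the entire chain of handles, then to isolate a single Dehn twist by means of a genuine surface relation, after which every individual twist can be recovered. First I would record how $H$ acts on curves: since $H$ shifts the handles by one, conjugation satisfies $H\,t_x\,\overline{H}=t_{H(x)}$ with $H(a_i)=a_{i+1}$, $H(b_i)=b_{i+1}$ and $H(c_j)=c_{j+1}$ (up to the identification of indices across the single end). Conjugating $A_1\overline{A_2}$, $B_1\overline{B_2}$ and $C_1\overline{C_2}$ by the powers $H^k$ therefore places $A_i\overline{A_{i+1}}$, $B_i\overline{B_{i+1}}$ and $C_i\overline{C_{i+1}}$ in the subgroup $G$ for every admissible index, and telescoping finite products of consecutive differences yields $A_i\overline{A_j}$, $B_i\overline{B_j}$ and $C_i\overline{C_j}$ for all $i,j$ within each family.

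Next I would link the three families to one another. Because consecutive curves drawn from different families meet once (for instance $c_j$ meets $b_j$ and $b_{j+1}$, while $a_i$ meets $b_i$), the braid relation $t_\alpha t_\beta\overline{t_\alpha}=t_{t_\alpha(\beta)}$ lets me rewrite suitable products of the difference elements already in $G$ as \emph{mixed} differences such as $A_i\overline{B_i}$ and $B_j\overline{C_j}$. Concatenating these shows that $G$ contains $X\overline{Y}$ for any two curves $X,Y$ from the union of the three families.

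The crux is to produce one honest single twist. Every element constructed so far is a product of a Dehn twist with an inverse Dehn twist, and within each abelian family no combination of such elements can have nonzero twist-exponent sum; to escape this constraint I need a genuine surface relation. I would therefore exhibit a lantern relation supported on a compact subsurface whose seven curves are, after applying the action of $H$ and the identifications above, expressible through our three families. The lantern identity $t_{\partial_1}t_{\partial_2}t_{\partial_3}t_{\partial_4}=t_{x}t_{y}t_{z}$ equates four twists with three, so after transposing six of the seven twists to one side as elements already known to lie in $G$ (each realized as a product of mixed differences), the seventh single twist is forced into $G$. I expect this lantern bookkeeping, together with the verification that the relevant subsurface curves really are carried into our families by $H$, to be the main obstacle.

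Finally, once a single twist, say $C_0$, lies in $G$, I would combine it with the differences $C_0\overline{C_j}$ to obtain every $C_j$, and then use the mixed differences $C_j\overline{B_j}$ and $B_i\overline{A_i}$ together with their $H$-conjugates to recover all the $B_i$ and $A_i$, which gives the stated conclusion.
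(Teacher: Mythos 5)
You are right that this paper never proves the lemma---it is imported from the companion paper \cite{mingen}---so your argument can only be judged on its own terms and against the techniques this paper uses in its analogous proofs. Your overall architecture (propagate the differences by conjugating with $H$, then break the ``twist times inverse twist'' constraint with a lantern relation) is the standard one in this literature, and your step 1 is fine: $H t_x \overline{H} = t_{H(x)}$ places all consecutive differences, and by telescoping all within-family differences, in the subgroup $G$. The first genuine gap is your step 2. The identity you call the braid relation, $t_\alpha t_\beta \overline{t_\alpha} = t_{t_\alpha(\beta)}$, is the conjugation formula, valid for every pair of curves, and it does not by itself yield mixed differences: conjugating one pure difference by another takes you \emph{out} of the three families. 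For instance, conjugating $C_1\overline{C_2}$ by $\phi = B_1\overline{B_2}$ gives $t_{\phi(c_1)}\overline{t_{\phi(c_2)}}$, and since $c_1$ meets $b_1$ and $b_2$ once each, the class of $\phi(c_1)$ in homology is $[c_1]\pm[b_1]\pm[b_2]$, so $\phi(c_1)$ is not isotopic to any $a_i$, $b_i$ or $c_j$. Some further mechanism is needed to land back inside the families---for example, exploiting infinite genus to ``pad'' any finite product $W$ of family twists with distant inverse twists, obtaining an element of $G$ that agrees with $W$ on a prescribed compact subsurface, and then conjugating known differences by such elements. Your sketch asserts the conclusion of this step without supplying any such mechanism.

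The second and more decisive gap is the lantern step as you state it: you want a lantern ``whose seven curves are \dots expressible through our three families.'' That is impossible. The three interior curves of a lantern pairwise intersect in two points, whereas any two curves from $\{a_i\}\cup\{b_i\}\cup\{c_j\}$ intersect in at most one point; geometric intersection number is a homeomorphism invariant, and $H$ preserves the families (it merely shifts indices), so no application of $H$ can repair this. Hence at least two of the three interior curves necessarily lie outside the families, and the factors $t_x\overline{t_{\partial_i}}$ into which you want to split the relation are differences about non-family curves. To place those in $G$ you must exhibit conjugators \emph{inside} $G$ carrying a known disjoint pair, say $(a_1,a_2)$, onto each pair (interior curve, boundary curve) of the lantern---which is exactly the padding/change-of-coordinates work missing from step 2. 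That bookkeeping is not a routine afterthought; it is the real content of the lemma, and it is what the hands-on arguments in this paper (Theorems A, B and D, whose generators deliberately mix the three families and a handle shift from the very start so that such conjugations become available) spend all their effort on. Your plan has the right shape, but as written it is not yet a proof.
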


\begin{thmD}\label{thm:D}
    The mapping class group of the Loch Ness Monster surface, $\mod(S)$, can be topologically generated by three involutions.
\end{thmD}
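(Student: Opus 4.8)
The plan is to mirror the structure of Theorems~\ref{thm:C} and~\ref{thm:A}: build a generating set of three involutions $I = \{\tau_1, \tau_2, \tau_3 F\}$, where $\tau_1, \tau_2$ are the rotations already known to be involutions, $F$ is a product of Dehn twists, and $\tau_3$ is a conjugate of one of the rotations by a power of the handle shift $H = \tau_1\tau_2$. The first task is to verify that the subgroup $G$ generated by $I$ contains everything needed. Since $\tau_1, \tau_2 \in G$, their product $H \in G$, hence $\tau_3 = H^k \tau_i \overline{H^k} \in G$ for the appropriate $i$ and $k$, and therefore $F \in G$ as well. By Lemma~\ref{lem:a1a2}, it suffices to show that $H$, $A_1\overline{A_2}$, $B_1\overline{B_2}$, and $C_1\overline{C_2}$ all lie in $G$; the handle shift is already present, so the real content is extracting the three relative twists from $F$.

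First I would design $F$ as a product of the form $A_1\overline{A_2}$-type building blocks disguised so that conjugation by $\tau_3$ inverts it cleanly. Following the template of Theorem~\ref{thm:C}, I want $F = \tau_3 \cdot W$ where $W$ is a word in Dehn twists chosen so that $\tau_3 W_{\mathrm{left}} \tau_3 = \overline{W_{\mathrm{right}}}$, i.e.\ conjugation by the involution $\tau_3$ sends the ``positive half'' of $W$ to the inverse of the ``negative half.'' Lemma~\ref{lemm8} then guarantees $\tau_3 W$ is an involution automatically. Because the Loch Ness Monster has a single end, $H$ has coincident attracting and repelling ends, so the conjugation by powers of $H$ will shift the indices of the curves $a_i, b_i, c_i$ along the infinite chain; I would exploit this exactly as the $R$-conjugations are used in Theorem~\ref{thm:A} to propagate a single extracted relation $X_i\overline{X_{i+1}}$ to all indices. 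The extraction itself proceeds by the same ``conjugate-and-cancel'' bookkeeping: form $F_2 = \conj{F}{H^m}$ to produce a shifted copy, then conjugate $F$ by products like $FF_2$ or $F\overline{F_2}$ so that most twist factors cancel against their shifted partners, isolating a short product such as $B_j\overline{C_j}$ or $C_j\overline{B_j}$, and iterate until the three generators $A_1\overline{A_2}$, $B_1\overline{B_2}$, $C_1\overline{C_2}$ are individually obtained.

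The main obstacle I anticipate is the same one that makes the single-end case genuinely different from the $n\ge 3$ case: with only one end, $H$ is a handle shift whose two ends coincide, so it does not split the surface into independent ``sides'' the way the rotations $\rho_1,\rho_2$ and the cyclic $R$ do for $S(n)$. Consequently I cannot rely on a large symmetric-group action to permute components, and every cancellation must be engineered purely from the geometry of how $H$ translates the bi-infinite chain of genera (indices running over all of $\mathbb{Z}$, as in Lemma~\ref{lem:a1a2}). I must choose the indices appearing in $F$ far enough apart that the shifted copies $\conj{F}{H^m}$ overlap $F$ in only the intended factors, ensuring disjointness of the non-overlapping curves so the corresponding twists commute and cancel. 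Getting these index offsets consistent—both for the extraction steps and for the $\tau_3$-conjugation that forces the involution property—is the delicate part; once the offsets are fixed, the commutation and braid relations make the cancellations routine, exactly as in the omitted calculations flagged in the remark after Theorem~\ref{thm:A}.

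\begin{proof}
We define $I = \{\tau_1, \tau_2, \tau_3 F\}$, where $\tau_3 = H^k \tau_2 \overline{H^k}$ for a suitable $k$, and $F$ is a product of Dehn twists of the form $W_{\mathrm{left}}\,\overline{W_{\mathrm{right}}}$ chosen so that $\tau_3 W_{\mathrm{left}} \tau_3 = \overline{W_{\mathrm{right}}}$. Let $G$ be the subgroup topologically generated by $I$. Since $\tau_1, \tau_2 \in G$, we have $H = \tau_1\tau_2 \in G$ and hence $\tau_3 \in G$, so that $F \in G$ as well. Using the handle shift $H$ to produce index-shifted copies $\conj{F}{H^m}$ and conjugating $F$ by appropriate products of these copies, one isolates short relative twists $B_j\overline{C_j}$, $C_j\overline{B_j}$, and ultimately $A_1\overline{A_2}$, $B_1\overline{B_2}$, $C_1\overline{C_2}$, exactly as in the proof of Theorem~\ref{thm:A}; we omit the analogous calculations. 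Thus $H$, $A_1\overline{A_2}$, $B_1\overline{B_2}$, $C_1\overline{C_2}$ all lie in $G$, and by Lemma~\ref{lem:a1a2}, $G$ contains every Dehn twist $A_i, B_i, C_j$, whence $G = \mod(S)$. Finally, since $\tau_3$ is an involution and $F$ is built so that $\tau_3 W_{\mathrm{left}} \tau_3 = \overline{W_{\mathrm{right}}}$, Lemma~\ref{lemm8} shows that $\tau_3 F$ is an involution, completing the proof.
\end{proof}
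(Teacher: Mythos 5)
Your proposal has the right strategic skeleton---it correctly identifies that by Lemma~\ref{lem:a1a2} the task reduces to producing $H$, $A_1\overline{A_2}$, $B_1\overline{B_2}$, $C_1\overline{C_2}$ inside $G$, and that Lemma~\ref{lemm8} will deliver the involution property once $F$ is symmetric under conjugation by the chosen rotation. But as written it is a plan, not a proof: you never exhibit $F$. The entire mathematical content of the theorem is the existence of a concrete word in Dehn twists that simultaneously satisfies two competing constraints: (a) the rotation conjugates its positive half to the inverse of its negative half (a rigid geometric condition on which curve indices may appear, determined by how $\tau_2$ acts on the chain of curves), and (b) the conjugate-and-cancel scheme with powers of $H$ actually isolates the three relative twists. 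The paper resolves this with the explicit choice $F_1 = A_{-2}B_{-3}C_{-4}\overline{C_3}\,\overline{B_2}\,\overline{A_1}$, takes $\tau_3 = \tau_2$ itself (no $H$-conjugate needed), and then runs roughly twenty lines of explicit manipulation ($F_2 = F_1^{H^2}$, $F_3 = F_2^{F_2F_1}$, \dots) to extract $B_2\overline{B_3}$, then $C_1\overline{C_2}$, then $A_1\overline{A_2}$. Writing ``we omit the analogous calculations'' skips exactly the part that must be checked, since whether \emph{any} index pattern satisfies both (a) and (b) is not obvious in advance.

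There is also a substantive inaccuracy in your claim that the extraction proceeds ``exactly as in the proof of Theorem~\ref{thm:A}.'' In Theorem~\ref{thm:A} the cancellations are driven by the end-permuting rotation $R$: shifted copies of curves live near different ends, are disjoint, and the corresponding twists commute. On the Loch Ness Monster surface there is no such rotation---only the single bi-infinite chain translated by $H$---and shifted copies of the curves genuinely intersect their neighbors. The paper's proof is forced to exploit this: several steps (e.g., forming $F_3$ and $F_6$) invoke the \emph{braid relation} at curves intersecting once, not commutation of disjoint curves. Your stated strategy of choosing indices ``far enough apart \ldots ensuring disjointness of the non-overlapping curves so the corresponding twists commute and cancel'' is therefore the wrong mechanism for this surface; pushed through literally, the pure-commutation bookkeeping would not close up, and the intersections you tried to avoid are precisely what the working proof uses. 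You even flag the one-end difficulty as the main obstacle in your discussion, but the formal proof then dismisses it by appeal to the $n\ge 17$ argument, which is the one argument that does not transfer.
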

\begin{proof}
    Let $G$ be the subgroup topologically generated by the set
    \[
\set{\tau_1, \tau_2,\tau_2A_{-2}B_{-3}C_{-4}\overline{C_3}\mkern3mu\overline{B_2}\mkern3mu\overline{A_1}}.
    \]
    Observe that $H=\tau_1\tau_2$ is in $G$ Let
    \begin{align*}
    F_1=A_{-2}B_{-3}C_{-4}\overline{C_3}\mkern3mu\overline{B_2}\mkern3mu\overline{A_1}.
    \end{align*}
    Conjugating $F_1$ by $H^2$, we get
    \begin{align*}
        F_2=F_1^{H^2} = A_1B_{-1}C_{-2}\overline{C_5B_4A_3}\in G.
    \end{align*}
    Then, since $B_{-3}$ intersects once with $C_{-2}$ and $\overline{B_4}$ intersects once with $\overline{C_3}$, by the braid relation,
    \begin{align*}
        F_3 = F_2^{F_2F_1}=A_1B_{-1}B_{-3}\overline{C_5}\mkern3mu\overline{C_3}\mkern3mu\overline{A_3}\in G.
    \end{align*}
    Next,
    \begin{align*}
        &F_4 = F_2\overline{F_3} = C_{-2}\overline{B_{-3}}\mkern3mu\overline{B_4}C_3 \in G.
    \end{align*}
    Using $F_4$ and the handle shift $H$, we get
    \begin{align*}
        &\overline{F_4}^{\overline{H}}=\overline{C_2}B_3B_{-4}\overline{C_{-3}}\in G,\\
        & F_4^{\overline{H}^2} = C_{-4}\overline{B_{-5}}\mkern3mu\overline{B_2}C_1\in G.
    \end{align*}
    By conjugating $F_3$ by $H^5$, we get that
    \begin{align*}
        F_5 = \overline{F_3}^{H^5} = A_8C_8C_{10}\overline{B_3}\mkern3mu\overline{B_5}\mkern3mu\overline{A_6} \in G.
    \end{align*}
    Since $\overline{B_3}$ intersects once with $\overline{C_2}$ and $\overline{C_2}$ intersects once with $\overline{B_2}$, it follows by the braid relation that
    \begin{align*}
        &F_6 = F_5^{F_5\overline{F_4}^{\overline{H}}}=A_8C_8C_{10}\overline{C_2}\mkern3mu\overline{B_5}\mkern3mu\overline{A_6}\in G,\\
        & F_5\overline{F_6}=\overline{B_3}C_2 \in G,\\
        &F_7 = F_6^{F_6F_4^{\overline{H}^2}}=A_8C_8C_{10}\overline{B_2}\mkern3mu\overline{B_5}\mkern3mu\overline{A_6}\in G,\\
        &F_6\overline{F_7} = \overline{C_2}B_2 \in G.
    \end{align*}
    Hence,
    \begin{align*}
        &(\overline{B_3}C_2)(\overline{C_2}B_2)=\overline{B_3}B_2=B_2\overline{B_3} \in G.\\
        &(B_2\overline{B_3})^{\overline{H}}=B_1\overline{B_2} \in G.
    \end{align*}
    Moreover,
    \[
    (\overline{C_2}B_2)\conj{(\overline{B_3}C_2)}{\overline{H}} = (\overline{C_2}B_2)(\overline{B_2}C_1) =C_1\overline{C_2} \in G.
    \]
    Since $B_1\overline{B_2}$ and $C_1\overline{C_2}$ are both in $G$,
    \begin{align*}
    (B_{-3}\overline{B_{-2}})(B_{-2}\overline{B_{-1}})(B_{-1}\overline{B_1})(B_{1}\overline{B_2})=B_{-3}\overline{B_2} \in G,
    \end{align*}
    and
    \begin{align*}
    (C_{-4}\overline{C_{-3}})\dots (C_{1}\overline{C_2}) (C_2\overline{C_3})=C_{-4}\overline{C_3} \in G.
    \end{align*}
    Then,
    \begin{align*}
    (B_2\overline{B_{-3}})F_1(C_3\overline{C_{-4}})=A_{-2}\overline{A_1} \in G.
    \end{align*}
    By a similar argument, $B_{-2}\overline{B_2}$ is also in $G$, thus $\overline{A_{-2}\overline{A_1}}=A_1\overline{A_{-2}}$ is in $G$. Then, using these, we get
    \begin{align*}
        (A_{-2}\overline{A_1})^{A_{-2}\overline{A_1}B_{-2}\overline{B_2}}=B_{-2}\overline{A_1} \in G.
    \end{align*}
    Using $(A_{-2}\overline{A_1})^H=A_{-1}\overline{A_2}$, we get
    \begin{align*}
        (B_{-1}\overline{B_{-2}})(B_{-2}\overline{A_1})=B_{-1}\overline{A_1} \in G,\\(B_{-1}\overline{A_1})^{B_{-1}\overline{A_1}A_{-1}\overline{A_2}}=A_{-1}\overline{A_1} \in G,\\
        (A_{-1}\overline{A_1})^H=A_1\overline{A_2} \in G.
    \end{align*}
    By the Lemma~\ref{lem:a1a2}, G contains the Dehn twists $A_i,B_i$ and $C_j$ for all $|i| \geq 1$ and $j\in \mathbb{Z}$. It follows from [\citenum{tez}, Proposition 6.1.15] that 
    \[
    G \cong \mod(S).
    \]
    It remains to check that $\tau_2A_{-2}B_{-3}C_{-4}\overline{C_3}\mkern3mu\overline{B_2}\mkern3mu\overline{A_1}$ is an involution, which follows from Lemma~\ref{lemm8} and the fact that
    \[
    \tau_2A_{-2}B_{-3}C_{-4}\tau_2 = A_1B_2C_3.
    \]
\end{proof}

\section{Torsion Generators}

In this section, we give torsion generators for $n\geq8$. We use the same methods as in the involution generators case. To do so, we use a slightly modified version of Theorem~\ref{thm:genlemma}. Unlike the involution case, we cannot write a handle shift as a composition of elements of order $n$, and therefore need to add a handle shift factor in one of the generators. Since we obtain torsion elements using Lemma~\ref{lemm8}, handle shifts must come in pairs, and thus we need to modify Theorem~\ref{thm:genlemma}.

\begin{theorem}[{\cite[Theorem~3.5]{mingen}}]\label{thm:genthm}
For $n \geq 3$, the group generated by the set
\[
    \set{R, A_1^1\overline{A_1^2}, B_1^1\overline{B_1^2}, C_0^1\overline{C_0^2}, h_{1,2}}
\]
contains Dehn twists $A_i^j$, $B_i^j$, $C_{i-1}^j$ for all $i \geq 1$ and $1\leq j \leq n$.
\end{theorem}

We now give our torsion generators.

\begin{thmE}\label{thm:E}
    For even $n \geq 8$, $\mod(S(n))$ is topologically generated by four torsion elements of order $n$.
\end{thmE}
\begin{proof}
Let $R,R',T$ be homeomorphisms of $S(n)$ that acts on the space of ends as the cycles $(1\, \dots \, n)$, $(1\,2\,n \,n-1 \, n-2 \, \dots \, 3)$, and $(1\,7\,2\,5\,3\,6\,4\, 9 \, 10 \, \dots \, n-1 \, n\, 8)$ (which reduces to $(1\,7\,2\,5\,3\,6\,4\,8)$ when $n=8$), respectively. These homeomorphisms can be realized as rotations on certain embeddings of $S(n)$, see Figure~\ref{fig:r't}.
    
\begin{figure}[htbp]
    \centering
    \begin{minipage}[b]{0.75\textwidth}  
        \centering
        \adjustbox{valign=m}{\includegraphics[width=\linewidth, height=6cm, keepaspectratio]{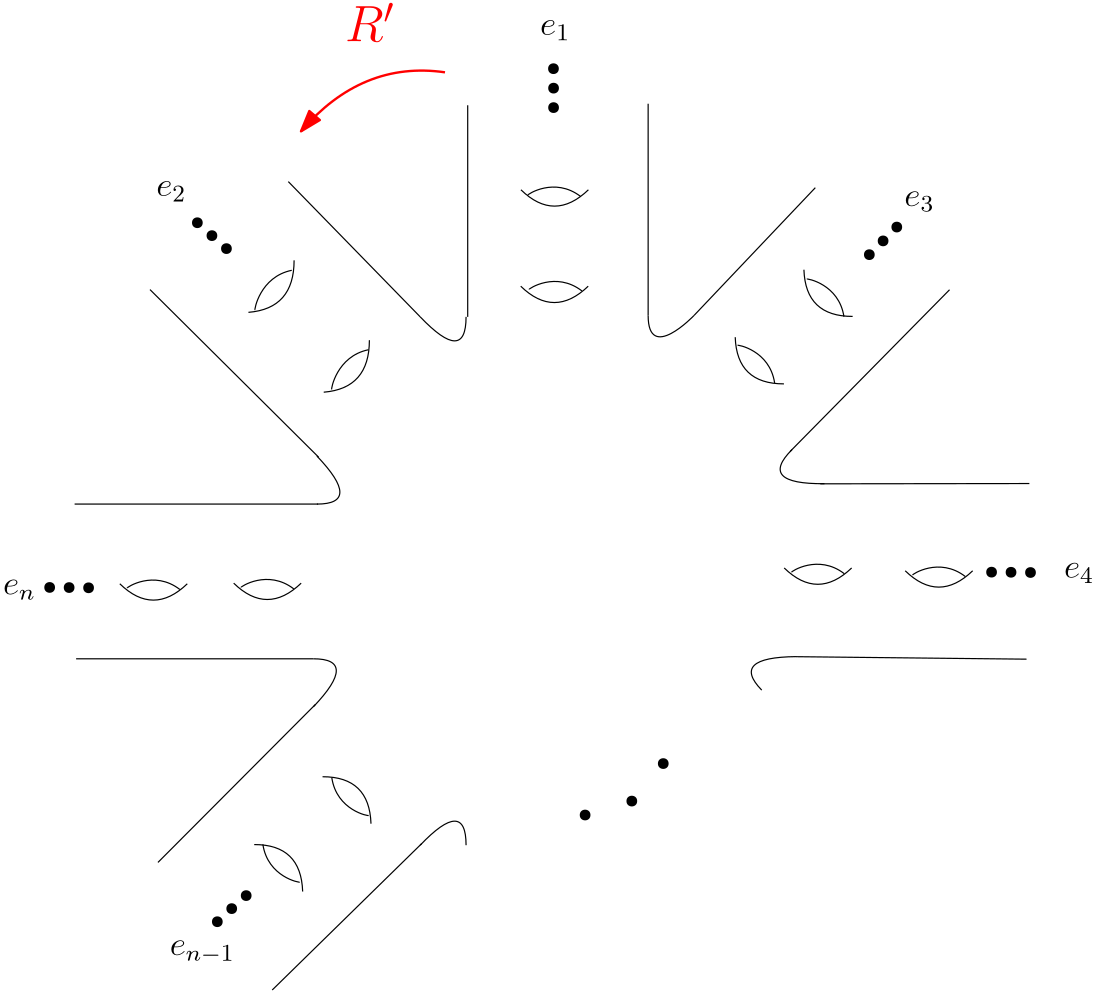}}
    \end{minipage}
    \hfill 
    \begin{minipage}[b]{0.75\textwidth}  
        \centering
        \adjustbox{valign=m}{\includegraphics[width=\linewidth, height=6cm, keepaspectratio]{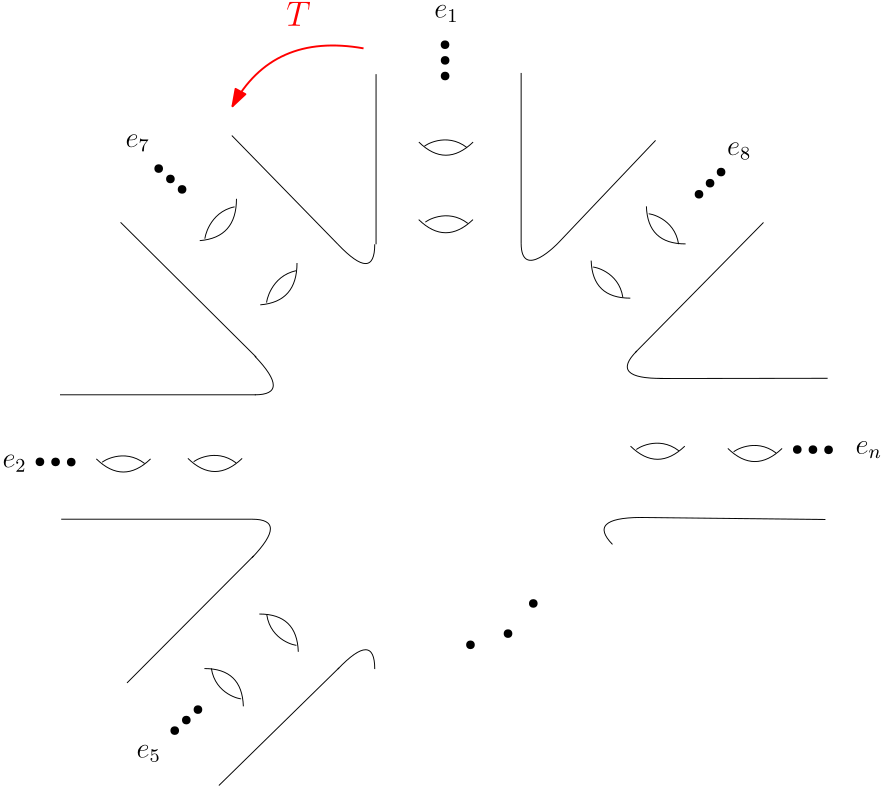}}
    \end{minipage}
    \caption{The rotations $R'$ and $T$ on $S(n)$ with even $n$}
    \label{fig:r't}
\end{figure}
    
    Let $G$ be the subgroup of $\mod(S(n))$ topologically generated by the set
    \[
    \set{R,R',T,RF_1},
    \]
    where $F_1=B^1_1A^1_2C^1_3\overline{B^2_1}\mkern3mu\overline{A^2_2} \mkern3mu\overline{C^2_3}h_{n-2,n-1}\overline{h_{n-1,n}}$.
    Since $R$ is in $G$, $F_1$ is also in $G$. Conjugating $F_1$ by $R^3$, we obtain
    \[
    F_2= \conj{F_1}{R^3}=B^4_1A^4_2C^4_3\overline{B^5_1}\mkern3mu\overline{A^5_2} \mkern3mu\overline{C^5_3}h_{1,2}\overline{h_{2,3}} \in G.
    \]
    Observe that the curves related to the Dehn twist factors of $F_1$ and $F_2$ do not intersect. Since $n\geq 8$, the handle shift factors in these elements also commute with the Dehn twists in the other (which is in fact the reason we require that $n\geq8$). It follows that 
    
    \begin{align*}
    F_3=\conj{F_1}{F_1F_2} &= \conj{F_1}{h_{1,2}\overline{h_{2,3}}} \\
                       &= (h_{1,2}\overline{h_{2,3}})F_1(h_{2,3}\overline{h_{1,2}}) \\
                       &= B^2_1A^1_1C^1_2\overline{B^2_3}\mkern3mu\overline{A^2_4} \mkern3mu\overline{C^2_5}h_{n-2,n-1}\overline{h_{n-1,n}} \in G.
    \end{align*}
    Then,
    \[
    F_4 = \conj{F_3}{F_3F_1} = B^2_1B^1_1C^1_2\overline{C_3^2}\mkern3mu\overline{A_4^2}\mkern3mu\overline{C_5^2}h_{n-2,n-1}\overline{h_{n-1,n}} \in G,
    \]
    and 
    \[
    F_5= F_3\overline{F_4} = A^1_1\overline{B^2_3}C^2_3\overline{B^1_1} \in G.
    \]
    
    By conjugation,
    \begin{align*}
    F_6 = (\conj{F_5}{\overline{R}})^{F_2} &=\conj{(A^n_1\overline{B^1_3}C^1_3\overline{B^n_1})}{F_2}\\
    &= A^n_1\overline{B^1_2}C^1_2\overline{B^n_1} \in G,
    \end{align*}
    and 
    \[
    F_7 = \conj{F_5}{\overline{R}}\overline{F_6} = (A^n_1\overline{B^1_3}C^1_3\overline{B^n_1})(B^n_1\overline{C^1_2}B^1_2\overline{A^n_1}) = \overline{B^1_3}C^1_3\overline{C^1_2}B^1_2 \in G.
    \]
    Note that $\conj{F_7}{R} =\overline{B^2_3}C^2_3\overline{C^2_2}B^2_2$ is also in $G$.
    Through a series of conjugations,
    \begin{align*}
        & \conj{(\overline{B^2_3}C^2_3\overline{C_2^2}B^2_2)}{\overline{F_2}} = \overline{B^2_1}C^2_1\overline{C_0^2}B^3_1 \in G, \\
        & \conj{(\overline{B^2_1}C^2_1\overline{C_0^2}B^3_1)}{\overline{R}^2} = \overline{B^n_1}C^n_1\overline{C_0^n}B^1_1 \in G, \\
        & \conj{(\overline{B^n_1}C^n_1\overline{C_0^n}B^1_1)}{F_2} = \overline{B^n_1}C^n_1\overline{C_0^n}B^2_1 \in G, \\
        & \conj{(\overline{B^n_1}C^n_1\overline{C_0^n}B^2_1)}{R^2} = \overline{B^2_1}C^2_1\overline{C_0^2}B^4_1 \in G, \\
        & (\overline{B^2_1}C^2_1\overline{C_0^2}B^3_1)(\overline{\overline{B^2_1}C^2_1\overline{C_0^2}B^4_1}) = B^3_1\overline{B^4_1} \in G.
    \end{align*}
    It immediately follows from conjugation by $\overline{R}^2$ that $B^1_1\overline{B^2_1}$ is in $G$. Then,
    \[
    F_8 = \conj{(B^1_1\overline{B^2_1})}{\overline{F_2}} = B^1_2\overline{B^1_1}\in G.
    \]
    Then,
    \[
    \conj{(B^1_2\overline{B^1_1})}{(B^1_2\overline{B^1_1})F_1} = A_2^1\overline{B^1_1} \in G,
    \]
    and therefore,
    \[
    \conj{B_1^1\overline{A^1_2}}{(B_1^1\overline{A^1_2})F_3} = A^1_1\overline{A^1_2} \in G.
    \]
    Using these,
    \begin{align*}
        & (A^1_1\overline{A^1_2})(A^1_2\overline{B^1_1})(B^1_1\overline{B^2_1})\conj{(B^1_1\overline{A^1_2})}{R}\conj{(A^1_2\overline{A^1_1})}{R} = (A^1_1\overline{A^1_2})(A^1_2\overline{B^1_1})(B^1_1\overline{B^2_1})(B^2_1\overline{A^2_2})(A^2_2\overline{A^2_1}) =   A^1_1\overline{A^2_1} \in G.
    \end{align*}

   The fact $B^1_1\overline{B^2_1}$ is in $G$ implies that 
   \[
   (B^2_1\overline{B^3_1})(B^3_1\overline{B^4_1}) =B^2_1\overline{B^4_1} \in G,
   \]
   and since $\overline{B^2_1}C^2_1\overline{C_0^2}B^4_1$ is in $G$, and the curve $b^4_1$ is disjoint from $c^2_1$ and $c_0^2$,
   \[
   (B^2_1\overline{B^4_1})(\overline{B^2_1}C^2_1\overline{C_0^2}B^4_1) = C^2_1\overline{C_0^2}
   \]
   is also in $G$. It follows that
   \begin{align*}
   & \conj{(C_0^2\overline{C^2_1})}{(C_0^2\overline{C^2_1})(B^3_1\overline{B^4_1})} = B^3_1\overline{C^2_1} \in G, \\
   & (C_0^2\overline{C^2_1})(C^2_1\overline{B^3_1}) = C_0^2\overline{B^3_1} \in G, \\
   & \conj{(C^2_0\overline{B^3_1})}{\overline{R}}(B^2_1\overline{B^3_1})(B^3_1\overline{C^2_0}) = (C^1_0\overline{B^2_1})(B^2_1\overline{B^3_1})(B^3_1\overline{C^2_0})=C^1_0\overline{C^2_0} \in G.
   \end{align*}

    We showed that $A_1^1\overline{A_1^2},B^1_1\overline{B_1^2}$ and $C_0^1\overline{C_0^2}$ are in $G$.
    
    To use Theorem~\ref{thm:genthm}, we need to show that $h_{1,2}$ is in $G$. We begin showing this by removing the Dehn twist factors of $F_1$ in pairs. Since $B^1_1\overline{B_1^2}$ is in $G$,
    \[
    F_8 = (B_1^2\overline{B^1_1})F_1 = A^1_2C^1_3\overline{A^2_2} \mkern3mu\overline{C^2_3}h_{n-2,n-1}\overline{h_{n-1,n}} \in G.
    \]
    Note that
    \begin{align*}
        &\conj{(A_1^1\overline{A_1^2})}{\overline{F_2}}=A_2^1\overline{{{A'}_1^1}} \in G,\\
        &(\conj{(A_1^1\overline{A^2_1})}{R})^{\overline{F_2}} = \conj{(A_1^2\overline{A^3_1})}{\overline{F_2}} = {A'}_1^1\overline{A^3_2} \in G,  \\
        & \conj{((A_2^1\overline{{{A'}_1^1}})({A'}_1^1\overline{A^3_2}))}{\overline{R^2}} = \conj{(A_2^1\overline{A^3_2})}{\overline{R^2}} = A_2^{n-1}\overline{A^1_2} \in G,\\
        &(A_2^{n-1}\overline{A^1_2})(A_2^1\overline{{{A'}_1^1}})=A_2^{n-1}\overline{{{A'}_1^1}} \in G,\\
        &\conj{(A_2^{n-1}\overline{{{A'}_1^1}})}{F_2}=A_2^{n-1}\overline{A_1^2} \in G,\\
        &(A_2^{n-1}\overline{A_1^2})(A_1^2\overline{A_1^1})\conj{(A_1^2\overline{A^{n-1}_2})}{\overline{R}}= (A_2^{n-1}\overline{A_1^2})(A_1^2\overline{A_1^1})(A_1^1\overline{A_2^{n-2}}) = A_2^{n-1}\overline{A_2^{n-2}} \in G,\\
        &\conj{(A_2^{n-1}\overline{A_2^{n-2}})}{R^3}= A_2^2\overline{A_2^1} \in G.
    \end{align*}
    Therefore,
    \[
    F_9 = (A_2^2\overline{A_2^1})F_8 = C^1_3\overline{C^2_3}h_{n-2,n-1}\overline{h_{n-1,n}} \in G.
    \]
    Finally, the fact that
       \begin{align*}
        &(\conj{(C_1^1\overline{B_1^1})}{\overline{F_2}})^{\overline{F_2}} = C_3^1\overline{B_3^1} \in G,\\
        &(\conj{(B_1^1\overline{B_1^2})}{\overline{F_2}})^{\overline{F_2}} = \conj{(B_2^1\overline{B_1^1})}{\overline{F_2}} =B_3^1\overline{B_2^1} \in G,\\
        &(C_3^1\overline{B_3^1})(B_3^1\overline{B_2^1}) = C_3^1\overline{B_2^1} \in G, \\
        &\conj{(C_3^1\overline{B_2^1})}{R}\conj{(B_2^1\overline{B_1^1})}{R}(B_1^2\overline{B^1_1})= (C_3^2\overline{B_2^2})(B_2^2\overline{B_1^2})(B_1^2\overline{B_1^1})=  C_3^2\overline{B_1^1} \in G,\\
        & (C_3^2\overline{B_1^1})(B^1_1\overline{B_2^1})(B^1_2\overline{C_3^1}) = C_3^2\overline{C_3^1} \in G.
       \end{align*}
   implies that,
   \[
   (C_3^2\overline{C_3^1})F_9 = h_{n-2,n-1}\overline{h_{n-1,n}} \in G.
   \]
    It immediately follows from conjugation by $R^3$ that $h_{1,2}\overline{h_{2,3}}$ is in G. It is clear that $h_{2,3}\overline{h_{3,4}}$ and $h_{3,4}\overline{h_{4,5}}$ are also in $G$. Thus,
    \[
    h_{2,3}\overline{h_{3,4}}h_{3,4}\overline{h_{4,5}} = h_{2,3}\overline{h_{4,5}} \in G.
    \]
    By the same argument, $h_{5,6}\overline{h_{7,8}}$ is also in $G$. Observe that
    \[
    \conj{(h_{5,6}\overline{h_{7,8}})}{T} = h_{3,4}h_{1,2} 
    \]
    is in $G$. It immediately follows that
    \[
    h_{2,3}\overline{h_{3,4}}h_{3,4}h_{1,2}  = h_{1,3} \in G.
    \]
    Finally,
    \[
    \conj{(h_{1,3})}{T} = T h_{1,3} \overline{T} = h_{7,6} = \overline{h_{6,7}} \in G,
    \]
    and conjugation by powers of $R$ shows that $h_{i,i+1}$ is in $G$ for all $1 \leq i \leq n-1$. By Theorem~\ref{thm:genthm}, $G$ contains the Dehn twists $A_i^j$, $B_i^j$, $C_{i-1}^j$ for all $i\geq 1$ and $1\leq j\leq n$, and by [\citenum{tez}, Proposition 6.1.10], $\overline{\pmod_\mathrm{c}(S(n))}$ is a subgroup of $G$. Since $G$ also contains $h_{i,i+1}$ for all $1 \leq i \leq n-1$, it contains $\pmod(S(n))$.
    
To show that $G$ is the entire $\mod(S(n))$, we need to show that it contains mapping classes whose images in $\Sym_n$ generate it. For brevity, we shall conflate $G$ with its image inside $\Sym_n$.
    Clearly,
    \[
    (1\, \dots \, n)(1\,2\,n \,n-1 \, n-2 \, \dots \, 3) = (1\, 3 \,2),
    \]
    and
    \[
    (1\, \dots \, n)^k(1\, 2\, 3)\overline{(1\,2\, \dots \, n)^k} = (k+1\ k+2 \ k+3),
    \] 
    for $1\leq k \leq n-1$. It follows that $(1\,2\,3), (2\,3\,4), \dots ,(n\,1\,2)$ are all in $G$. Observe that 
    \[
    (k \ k+1 \ k+2)(1\,2 \, k)\overline{(k \ k+1 \ k+2)} = (1\,2 \ k+1)
    \]
    for $3\leq k \leq n-2$. Therefore, $(1\,2\,4), (1\,2\,5),\dots, (1\,2\ n-1)$ are all in $G$. Since,
    \[
    \overline{(1\, 2\, k)}  = (1\, k\, 2),
    \]
    and
    \[
    (1\, 2\, k)(1\, j\, 2) = (1 \, j \, k),
    \]
    every $3$-cycle of the form $(1,k,j)$ is in $G$. For $i,j,k>1$, it is clear that
    \[
    (1 \, i \,j)(1\, j \, k) = (i \, j \, k),
    \]
    which implies that $G$ contains every $3$-cycle. Finally,
    \[
    (1\,2\,4)(1\,4\,5)(1\,5\,6) \dots (1 \ n-1 \ n) = (1\,2\,4\,\dots\,n) \in G,
    \]
    and 
    \[
    (1\,2\,3\,4\,\dots\,n)\overline{(1\,2\,4\,5\,\dots\,n)} = (3\,4) \in G.
    \]
    It follows from conjugation by $\overline{(1\,2\,3\,4\,\dots\,n)^2}$ that $(1\,2)$ is in $G$. Since $(1\,2\,3\,4\,\dots\,n)$ and $(1\,2)$ generate the symmetric group, $G$ is the entire $\mod(S(n))$.
    All that remains is to show that the order $RF_1$ is $n$. Note that,
    \[
    \conj{(B^1_1A^1_2C^1_3h_{n-2,n-1})}{R} = R(B^1_1A^1_2C^1_3h_{n-2,n-1}) \overline{R} =B^2_1A^2_2C^2_3h_{n-1,n}.
    \]
    By Lemma~\ref{lemm8}, $RF_1 = R(B^1_1A^1_2C^1_3h_{n-2,n-1})\overline{(B^2_1A^2_2C^2_3h_{n-1,n})}$ has order $n$, and we are done.
   
\end{proof}

For odd $n$, any $n$-cycle is an even permutation. Since the product of any two even permutations is even, two $n$-cycles cannot generate $\Sym_n$. Therefore for odd $n$, the element $R'$ cannot be an $n$-cycle if we wish to generate the entire mapping class group. Therefore, we compromise by putting an element with order $n-1$.

\begin{thmF}\label{thm:F}
    For odd $n \geq 8$, $\mod(S(n))$ is topologically generated by three torsion elements of order $n$ and one torsion element of order $n-1$.
\end{thmF}
\begin{proof}
    Let $G$ be the subgroup of $\mod(S(n))$ topologically generated by the set
    \[
    \set{R,R',T,RF_1},
    \]
    where $F_1=B^1_1A^1_2C^1_3\overline{B^2_1}\mkern3mu\overline{A^2_2} \mkern3mu\overline{C^2_3}h_{n-2,n-1}\overline{h_{n-1,n}}$, and $R'$ is a homeomorphism whose image in $\Sym_n$ is the $n-1$-cycle $(1\,2\,4\, \dots \, n)$. This homeomorphism can be realized geometrically as the $\dfrac{2\pi}{n-1}$ radian counterclockwise rotation of the model depicted in Figure~\ref{fig:n-1}. The proof is identical to that of Theorem~\ref{thm:E}, except for the fact that $R'$ directly projects to $(1\,2\,4\, \dots \, n)$, and therefore the derivation of it is skipped.
\begin{figure}[H]
      \centering
      \scalebox{0.25}{\includegraphics{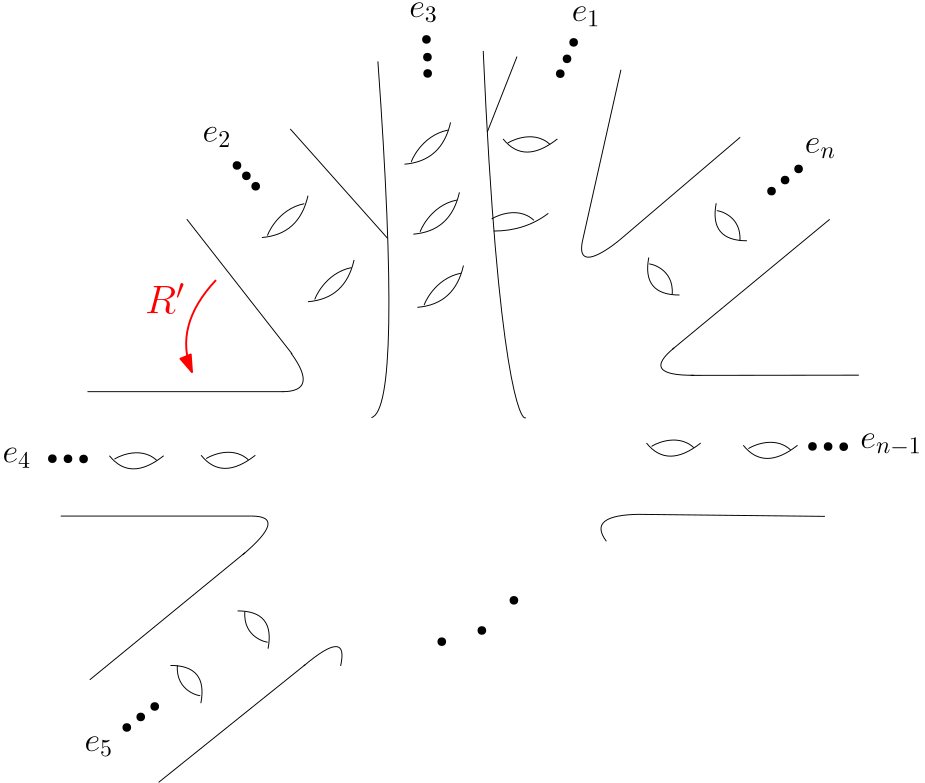}}
      \caption{The rotation $R'$ on $S(n)$ with odd $n$}
      \label{fig:n-1}
   \end{figure}    
\end{proof}

\bibliographystyle{amsplain}
\bibliography{references}

@inbook{Aramayona_2020,
   title={Big Mapping Class Groups: An Overview},
   ISBN={9783030559281},
   url={http://dx.doi.org/10.1007/978-3-030-55928-1_12},
   DOI={10.1007/978-3-030-55928-1_12},
   booktitle={In the Tradition of Thurston},
   publisher={Springer International Publishing},
   author={Aramayona, Javier and Vlamis, Nicholas G.},
   year={2020},
   pages={459–496} }

@article{apyinvolution,
author = {Altunöz, Tülin and Pamuk, Mehmetcik and Yıldız, Oğuz},
title = {Involution generators of the big mapping class group},
journal = {Journal of Topology and Analysis},
volume = {0},
number = {0},
pages = {1-18},
year = {0},
doi = {10.1142/S1793525324500171},
}

@article{Korkmaz2020,
    author = {Korkmaz, Mustafa},
    title = {Mapping class group is generated by three involutions},
    journal = {Mathematical Research Letters},
    year = {2020},
volume = {27},
number = {4},
pages = {1095-1108}
}

@article{Wajnyrb1996,
title = {Mapping class group of a surface is generated by two elements},
journal = {Topology},
volume = {35},
number = {2},
pages = {377-383},
year = {1996},
issn = {0040-9383},
doi = {https://doi.org/10.1016/0040-9383(95)00037-2},
url = {https://www.sciencedirect.com/science/article/pii/0040938395000372},
author = {Wajnryb, Bronislaw }
}

@article{Korkmaz2004,
    author = {Mustafa Korkmaz},
    title = {Generating the surface mapping class group by two elements},
    journal = {Transactions of the American Mathematical Society},
    year = {2004},
volume = {357},
number = {8},
pages = {3299-3310},
doi = {https://doi.org/10.1090/s0002-9947-04-03605-0}
}

@article{Yildiz2022,
    author = {Yıldız, Oğuz},
    title = {Generating mapping class group by two torsion elements},
    journal = {Mediterranean Journal of Mathematics},
    year = {2022},
volume = {19},
number = {59},
doi = {https://doi.org/10.1007/s00009-022-01978-8}
}

@article{McCarthy1987,
  author  = {McCarthy, John D. and Papadopoulos, Athanase},
  title   = {Involutions in surface mapping class groups},
  journal = {Enseign. Math.},
  volume  = {33},
  number  = {2},
  year    = {1987},
  pages   = {275--290},
}

@misc{Luo2000,
      title={Torsion Elements in the Mapping Class Group of a Surface}, 
      author={Feng Luo},
      year={2000},
      eprint={math/0004048},
      archivePrefix={arXiv},
      primaryClass={math.GT},
      url={https://arxiv.org/abs/math/0004048},
note = {arXiv:math/0004048},
doi= {https://doi.org/10.48550/arXiv.math/0004048}
}

@article{Brendle2004,
title = {Every mapping class group is generated by 6 involutions},
journal = {Journal of Algebra},
volume = {278},
number = {1},
pages = {187-198},
year = {2004},
issn = {0021-8693},
doi = {https://doi.org/10.1016/j.jalgebra.2004.02.019},
url = {https://www.sciencedirect.com/science/article/pii/S0021869304001632},
author = {Brendle, Tara E. and Farb, Benson}
}

@misc{Kassabov2003,
      title={Generating Mapping Class Groups by Involutions}, 
      author={Kassabov, Martin},
      year={2003},
      eprint={math/0311455},
      archivePrefix={arXiv},
      primaryClass={math.GT},
      url={https://arxiv.org/abs/math/0311455},
note  = {arXiv:math/0311455},
doi= {https://doi.org/10.48550/arXiv.math/0311455}
}

@phdthesis{huynh,
  author = {Huynh, Andy},
  title = {Torsion in big mapping class groups},
    school = {Rutgers, The State University of New Jersey},
    year = {2022}
}

@article{Patel2018,
   title={Algebraic and topological properties of big mapping class groups},
   volume={18},
   ISSN={1472-2747},
   url={http://dx.doi.org/10.2140/agt.2018.18.4109},
   DOI={10.2140/agt.2018.18.4109},
   number={7},
   journal={Algebraic \& Geometric Topology},
   publisher={Mathematical Sciences Publishers},
   author={Patel, Priyam and Vlamis, Nicholas},
   year={2018},
   month=dec, pages={4109–4142} }

@article{arapatevla,
    author = {Aramayona, Javier and Patel, Priyam and Vlamis, Nicholas G},
    title = {The First Integral Cohomology of Pure Mapping Class Groups},
    journal = {International Mathematics Research Notices},
    volume = {2020},
    number = {22},
    pages = {8973-8996},
    year = {2020},
    month = {09},
    issn = {1073-7928},
    doi = {10.1093/imrn/rnaa229},
    url = {https://doi.org/10.1093/imrn/rnaa229},
}

@misc{mingen,
author = {Altunöz, Tülin and Bellek, Celal Can and Gül, Emir and Pamuk, Mehmetcik and Yıldız, Oğuz},
title = {Minimal sets of generators for big mapping class groups},
year={2025},
      eprint={arXiv:2512.17465},
      archivePrefix={arXiv},
      primaryClass={math.GT},
      url={https://arxiv.org/abs/2512.17465},
note  = {arXiv:2512.17465},
}

@Inbook{Dehn1987,
author="Dehn, Max",
title="The Group of Mapping Classes",
bookTitle="Papers on Group Theory and Topology",
year="1987",
publisher="Springer New York",
address="New York, NY",
pages="256--362",
isbn="978-1-4612-4668-8",
doi="10.1007/978-1-4612-4668-8_16",
url="https://doi.org/10.1007/978-1-4612-4668-8_16"
}

@InProceedings{Humphries1979,
author="Humphries, Stephen P.",
editor="Fenn, Roger",
title="Generators for the mapping class group",
booktitle="Topology of Low-Dimensional Manifolds",
year="1979",
publisher="Springer Berlin Heidelberg",
address="Berlin, Heidelberg",
pages="44--47",
isbn="978-3-540-35186-3"
}

@article{Lickorish1964,
title={A finite set of generators for the homeotopy group of a 2-manifold},
volume={60},
DOI={10.1017/S030500410003824X},
number={4},
journal={Mathematical Proceedings of the Cambridge Philosophical Society},
author={Lickorish, W. B. R.},
year={1964},
pages={769–778}}

@article{tez,
  author        = {Bellek, Celal Can},
  title         = {Structure and Topological Generation of Big Mapping Class Groups},
  eprint        = {arXiv:2512.17457},
  archivePrefix = {arXiv},
  primaryClass={math.GT},
  year          = {2025},
  note = {Master's thesis, arXiv:2512.17457}
}

@article{Maclachlan1971,
  title={Modulus space is simply-connected},
  author={Maclachlan, C.},
  journal={Proceedings of the American Mathematical Society},
  volume={29},
  pages={85--86},
  year={1971}
}

@article{korkmaz2005,
 ISSN = {00029947},
 author = {Korkmaz, Mustafa},
 journal = {Transactions of the American Mathematical Society},
 number = {8},
 pages = {3299--3310},
 publisher = {American Mathematical Society},
 title = {Generating the Surface Mapping Class Group by Two Elements},
 volume = {357},
 year = {2005}
}

@misc{Yildiz2020,
author = {Yıldız, Oğuz},
title = {Generating the mapping class group by three involutions},
year={2020},
      eprint={arXiv:2002.09151},
      archivePrefix={arXiv},
      primaryClass={math.GT},
      url={hhttps://arxiv.org/abs/2002.09151},
note  = {arXiv:2002.09151},
}

@article{MalesteinTao2024,
  author    = {Malestein, Justin and Tao, Jing},
  title     = {Self-similar surfaces: involutions and perfection},
  journal   = {Michigan Mathematical Journal},
  volume    = {74},
  number    = {3},
  pages     = {485--508},
  year      = {2024},
  doi       = {10.1307/mmj/20216114},
}

@article{Baik2024,
  author    = {Baik, Juhun},
  title     = {Topological normal generation of big mapping class groups},
  journal   = {arXiv preprint},
  year      = {2024},
  eprint    = {2409.12700},
  archivePrefix = {arXiv},
}

\end{document}